\newtheorem{theorem}{Theorem}[section]
\newtheorem{lemma}[theorem]{Lemma}
\theoremstyle{definition}
\theoremstyle{remark}
\newtheorem{remark}[theorem]{Remark}
\numberwithin{equation}{section}
\newcommand{\upcite}[1]{\textsuperscript{\cite{#1}}}
\newenvironment{proof3.2}{\medskip\noindent{\bf Proof of Theorem 3.2:}\enspace}{\hfill \qed \newline \medskip}
\newenvironment{proof3.3}{\medskip\noindent{\bf Proof of Theorem 3.3:}\enspace}{\hfill \qed \newline \medskip}
\newenvironment{proof4.2}{\medskip\noindent{\bf Proof of Theorem 4.2:}\enspace}{\hfill \qed \newline \medskip}
\begin{document}
\title[On behavior of  solutions to a Petrovsky equation]
{On behavior of  solutions to a Petrovsky equation with damping and variable-exponent source}
The final version of this manuscript has been accepted for publication in SCIENCE CHIAN Mathematics.
\author[M. Liao, Z. Tan ]
{Menglan Liao, Zhong Tan$^*$}

\address{Menglan Liao \newline  
School of Mathematical Sciences, Xiamen University, Xiamen, Fujian, 361005, People's Republic of China }
\email{liaoml14@mails.jlu.edu.cn}
\address{Zhong Tan \newline  
School of Mathematical Sciences, Xiamen University, Xiamen, Fujian, 361005, People's Republic of China }
\email{tan85@xmu.edu.cn}

\subjclass[2010]{35L35, 35B44, 35A01, 35B40.}
\keywords{Petrovsky equation;  damping; variable-exponent source; blow-up; energy decay rates.}
\thanks{Supported by the National Natural Science Foundation of China (Grant Nos. 11926316, 11531010, 12071391)}
\thanks{$^*$ Corresponding author: Zhong Tan}

\begin{abstract}
This paper deals with the following Petrovsky equation with damping and nonlinear source
\[u_{tt}+\Delta^2 u-M(\|\nabla u\|_2^2)\Delta u-\Delta u_t+|u_t|^{m(x)-2}u_t=|u|^{p(x)-2}u\]
under initial-boundary value conditions, where $M(s)=a+ bs^\gamma$ is a positive $C^1$ function with parameters $a>0,~b>0,~\gamma\geq 1$, and $m(x),~p(x)$ are given measurable functions.  The upper bound of the blow-up time is derived for low initial energy using the differential inequality technique. For $m(x)\equiv2$, in particular, the upper bound of the blow-up time is obtained by the combination of Levine's concavity method and some differential inequalities under high initial energy. In addition, by making full use of the strong damping, the lower bound of the blow-up time is discussed. Moreover, the global existence of solutions and an energy decay estimate are presented by establishing some energy estimates and by exploiting a key integral inequality.

\end{abstract}

\maketitle

\section{Introduction}
It is well known that nonlinear wave equations can be used to describe a variety of problems in physics, engineering, chemistry, material science and other sciences. The study of nonlinear wave equations has also great significance in mathematical analysis. Guesmia \cite{G1998} considered the fourth-order wave equation
\[u_{tt}+\Delta^2u+q(x)u+g(u_t)=0,\]
 for a continuous and increasing function $g$ with $g(0)=0$, and a bounded function $q$􏱃, and proved a global existence and a regularity result. At the same time, decay results for weak and strong solutions  were also established under suitable growth conditions on $g$. Messaoudi \cite{Messaoudi2002} studied the nonlinearly damped semilinear Petrovsky equation
 \[u_{tt}+\Delta^2u+a|u_t|^{m-2}u_t=b|u|^{p-2}u,\]
where $a,􏰼b$ are positive constants.   He proved the existence of a local weak solution and showed that this solution blows up in finite time if $p > m$ with negative initial energy and showed that the solution is global if $m\geq p$. Wu and Tsai \cite{WT2009} extended the  results above, and proved that the solution is global in time under some conditions without the relation between $m$ and $p$. They also proved that the local solution blows up in finite time if $p > m$ and the initial energy is nonngeative. The decay estimates of the energy function and the estimates of the lifespan of solutions were given. The blow-up result has been further improved by Chen and Zhou \cite{CZ2009}, where they proved that the solution blows up in finite time if the positive initial energy satisfies a suitable condition. Li, Sun and Liu \cite{LSL2012} studied the following strongly damped Petrovsky system with nonlinear damping
\[u_{tt}+\Delta^2u-\Delta u_t+|u_t|^{m-1}u_t=b|u|^{p-1}u,\]
where they proved the global existence of the solution under conditions without any relation between $m$ and $p$, and established an exponential decay rate. They also showed that the solution blows up in finite time if $p>m$ and the initial energy is less than the potential well depth. Guo and Li  \cite{GL2019} discussed the lower and upper bounds for the lifespan of solutions to a fourth-order nonlinear hyperbolic equation with strong damping 
\[u_{tt}+\Delta^2u-\Delta u+\omega\Delta u_t+\alpha(t)u_t=b|u|^{p-2}u,\]
and extended and improved the results in \cite{WU2018}.
Readers can refer to \cite{HL2020,TS2012,LG2016, LSW2019,K2019} and the references therein for more other results.

To describe the nonlinear vibrations of an elastic string, Kirchhoff \cite{Kirchhoff} first introduced the following equation:
\[\rho h \frac{\partial^2 u}{\partial t^2}+\delta \frac{\partial u}{\partial t}=\Big\{p_0+\frac{Eh}{2L}\int_0^L \Big(\frac{\partial u}{\partial x}\Big)^2dx\Big\}\frac{\partial^2 u}{\partial x^2}+f,\quad0\leq x\leq L,~t\geq 0,\]
where $u=u(x,t)$ is the lateral deflection, $E$ is the Young's modulus, $\rho$ is the mass density, $h$ is the cross-section area, $L$ is the length, $p_0$ is the initial axial tension, $\delta$ is the resistance modulus, and $f$ is the external force. In recent years,  the problem with Kirchhoff type has been further developed, see \cite{Wu1,Wu2,Yang,Zhou2015}. Zhou  \cite{Zhou2015}, in particular considered a Kirchhoff type plate equation 
\[u_{tt}+\alpha\Delta^2 u-a\Delta u-b \Big(\int_\Omega |\nabla u|^2dx\Big)^\gamma\Delta u+\lambda u_t=\mu |u|^{p-2}u,\]
where they showed the blow-up of solutions and the lifespan estimates for three different ranges of initial energy. Global existence of solutions was proved by the potential well theory, and decay estimates of the energy function were established by using Nakao's inequality.

With the rapid development of the mathematical theory,  much attention has been paid to the study of mathematical nonlinear models of hyperbolic, parabolic and elliptic equations with variable exponents of nonlinearity.  For instance, Messaoudi,   Talahmeh and Al-Smail \cite{SA2017} considered the following  nonlinear wave equation with variable exponents
\[u_{tt}-\Delta u+a|u_t|^{m(x)-2}u_t=b|u|^{p(x)-2}u,\]
they established the existence of a
unique weak solution by using the Faedo-Galerkin method under suitable assumptions, and also proved the finite time blow-up of solutions.

Inspired  by the works mentioned above, in this paper, we are concerned with the following initial-boundary problem:
\begin{equation}
\label{1.1}
\begin{cases}
u_{tt}+\Delta^2 u-M(\|\nabla u\|_2^2)\Delta u-\Delta u_t+|u_t|^{m(x)-2}u_t=|u|^{p(x)-2}u \quad \text{ in }\Omega \times (0,T),\\
 u(x,t)=\frac{\partial u(x,t)}{\partial \nu}=0\quad\text{on } \partial\Omega \times (0,T),\\
      u(x,0)=u_0(x),~u_t(x,0)=u_1(x)\quad\text{in }\Omega,
      \end{cases}
\end{equation}
where $\Omega$ is a bounded smooth domain in $\mathbb{R}^N(N\geq 1)$,  $\nu$ is the unit outward normal vector on $\partial\Omega$,  $u_0(x)\in H_0^2(\Omega)$, $u_1(x)\in L^2(\Omega)$. $M(s)=a+ bs^\gamma$ is a positive $C^1$ function with parameters $a>0,~b>0,~\gamma\geq 1$. The exponents $m(x)$ and $p(x)$ are given measurable functions on $\Omega$ satisfying
 \[m(x)\in [m^-,m^+]\subset (1,\infty),\quad p(x)\in [p^-,p^+]\subset (1,\infty)\quad \forall  x\in \Omega,\]
and the log-H\"{o}lder continuity condition i.e. for some $A>0$ and any $0<\delta<1$:
\begin{equation*}
   |q(x)-q(y)|\leq-\frac{A}{\log|x-y|}\quad \text{for all }x,y\in \Omega~\text{with}~|x-y|<\delta,
\end{equation*}
here 
\[m^-:=ess\inf_{x\in\Omega}m(x),\quad m^+:=ess\sup_{x\in\Omega}m(x),\]\[ p^-:=ess\inf_{x\in\Omega}p(x),\quad p^+:=ess\sup_{x\in\Omega}p(x).\]
To the best of our knowledge, there is no general theory known concerned with existence and nonexistence of solutions to problems like (\ref{1.1}). Antontsev,   Ferreira and  Pi\c{s}kin \cite{AFP2021} obtained the  existence of local weak solutions by using the Banach contraction mapping principle 
under suitable assumptions, and  gave a blow-up result with negative initial energy when $-M(\|\nabla u\|_2^2)\Delta u$ in problem \eqref{1.1} is absent, but they did not discuss the blow-up phenomena with positive initial energy and other properties of solutions.   It seems that one cannot directly apply the classical potential well method to construct some invariant sets and to analyze the behavior of solutions as in \cite{LSL2012,Zhou2015} because for all $f\in L^{p(x)}(\Omega)$, 
\begin{equation*}
    \min\left\{\|f\|_{p(x)}^{p^-},\|f\|_{p(x)}^{p^+}\right\}\leq \int_\Omega |f|^{p(x)}dx\leq\max\left\{\|f\|_{p(x)}^{p^-},\|f\|_{p(x)}^{p^+}\right\},
\end{equation*}
which is different from that $\|f\|_{p}=\Big(\int_\Omega |f|^{p}dx\Big)^{\frac 1p}$ for all $f\in L^p(\Omega)$.

In this paper, we will develop a new technique to discuss bounds of the blow-up time and decay rates. This paper is organized as follows: in Section 2, we introduce the Banach spaces that will be suitable to study problem (\ref{1.1}),  some notations and useful lemmas in the sequel. Sections 3 will be devoted to discussing the lifespan of solutions, i.e. the upper and lower bounds of the blow-up time. In Section 4,  some energy estimates and a key integral inequality \cite{M1999} are used to prove a uniform decay rates of the solution.

\section{Preliminaries}
Throughout this paper,  we denote by $\|\cdot\|_p$ the $L^p(\Omega)$ norm for $1\leq p\le \infty$. We will equip $H_0^2(\Omega)$ with the norm $\|u\|_{H_0^2(\Omega)} = \|\Delta u\|_2$ for $u\in H_0^2(\Omega)$, which is equivalent to the standard one due to Poincar\'e's inequality. Firstly, let us introduce the space $L^{p(x)}(\Omega)$ in \cite{Fan1,Fan2}. Set
\begin{equation}\label{2.1}
    p:\Omega\rightarrow (1,\infty)\text{ be a measurable function.}
\end{equation}
Define
\[A_{p(x)}(f)=\int_\Omega |f|^{p(x)}dx< \infty\]
and
\[L^{p(x)}(\Omega)=\{f\text{ is measurable on }\Omega:A_{p(x)}(f)<\infty\}\]
equipped with the norm
\[\|f\|_{p(x)}=\mathrm{inf}\Big\{\lambda>0:A_{p(x)}\Big(\frac f\lambda\Big)\leq1\Big\}.\]
Let us assume that
\begin{equation}\label{2.2}
    p(x)\in [p^-,p^+]\subset (1,\infty)\quad  a.e.~x\in \Omega.
\end{equation}

\begin{lemma}\label{lem2.1}
Let \eqref{2.1} and \eqref{2.2} be fulfilled.  Then for every $f\in L^{p(x)}(\Omega)$
\begin{equation*}
    \min\left\{\|f\|_{p(x)}^{p^-},\|f\|_{p(x)}^{p^+}\right\}\leq A_{p(x)}(f)\leq\max\left\{\|f\|_{p(x)}^{p^-},\|f\|_{p(x)}^{p^+}\right\}.
\end{equation*}
\end{lemma}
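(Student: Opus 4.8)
The plan is to exploit the defining property of the Luxemburg norm, namely that $A_{p(x)}(f/\|f\|_{p(x)}) \le 1$, together with the pointwise fact that for $t \ge 0$ and any exponent $p(x) \in [p^-, p^+]$ one has $\min\{t^{p^-}, t^{p^+}\} \le t^{p(x)} \le \max\{t^{p^-}, t^{p^+}\}$. First I would dispose of the trivial case $f = 0$ (where $\|f\|_{p(x)} = 0$ and all quantities vanish), so that from now on $\lambda := \|f\|_{p(x)} > 0$ and we may divide by it. The starting point is the standard normalization identity $A_{p(x)}(f/\lambda) = \int_\Omega |f/\lambda|^{p(x)}\,dx \le 1$; in fact, since the modular is continuous and the infimum defining $\|f\|_{p(x)}$ is attained, we actually have $\int_\Omega (|f|/\lambda)^{p(x)}\,dx = 1$. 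I would record this equality as the key intermediate step.

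Next I would split the integral according to whether $\lambda \ge 1$ or $\lambda < 1$, because the monotonicity of $s \mapsto s^{p(x)}$ behaves differently in these two regimes. Suppose first $\lambda \le 1$; then $|f(x)|/\lambda \ge |f(x)|$ wherever $|f(x)| \le \lambda$ is false, but more to the point, for the lower bound: since $|f|^{p(x)} = \lambda^{p(x)} (|f|/\lambda)^{p(x)}$ and $\lambda^{p(x)} \le \lambda^{p^-}$ when $\lambda \le 1$... actually the cleanest route is to write, using the equality above,
\begin{equation*}
A_{p(x)}(f) = \int_\Omega |f|^{p(x)}\,dx = \int_\Omega \lambda^{p(x)} \Big(\frac{|f|}{\lambda}\Big)^{p(x)}\,dx,
\end{equation*}
and then bound $\lambda^{p(x)}$ above and below by $\max\{\lambda^{p^-}, \lambda^{p^+}\}$ and $\min\{\lambda^{p^-}, \lambda^{p^+}\}$ respectively, pull the constant out of the integral, and use $\int_\Omega (|f|/\lambda)^{p(x)}\,dx = 1$. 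This immediately yields
\begin{equation*}
\min\{\lambda^{p^-}, \lambda^{p^+}\} \le A_{p(x)}(f) \le \max\{\lambda^{p^-}, \lambda^{p^+}\},
\end{equation*}
which is exactly the claim since $\lambda = \|f\|_{p(x)}$.

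The main obstacle — really the only subtle point — is justifying that the modular equals $1$ at $\lambda = \|f\|_{p(x)}$, rather than merely being $\le 1$. This requires the continuity of $\mu \mapsto A_{p(x)}(f/\mu)$ on $(0,\infty)$ together with its monotone decrease to $0$ as $\mu \to \infty$, plus the fact that $A_{p(x)}(f/\mu) \to \infty$ (or at least exceeds $1$) as $\mu$ decreases below the norm; these follow from the dominated/monotone convergence theorems applied to $|f|^{p(x)}/\mu^{p(x)}$, using $\mu \le \mu^{p^-} \cdot(\text{something})$-type bounds and the fact that $p^+ < \infty$ so that $\mu^{-p(x)} \le \max\{\mu^{-p^-},\mu^{-p^+}\}$ stays integrable-dominating. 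Alternatively, if one wishes to avoid the equality, one can argue the two inequalities separately: the upper bound on $A_{p(x)}(f)$ can be derived directly from $A_{p(x)}(f/\lambda) \le 1$ by the same factoring trick, and the lower bound follows by a contradiction argument — if $A_{p(x)}(f) < \min\{\lambda^{p^-},\lambda^{p^+}\}$ held, one could exhibit a smaller admissible $\lambda'$ in the infimum. I would present the version using the equality $A_{p(x)}(f/\|f\|_{p(x)}) = 1$, as it is the shortest and is a well-documented property of Musielak–Orlicz spaces (see \cite{Fan1,Fan2}).
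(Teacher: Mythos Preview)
The paper does not actually prove Lemma~\ref{lem2.1}; it is stated without proof as a standard property of variable-exponent Lebesgue spaces, with the references \cite{Fan1,Fan2} given for the background theory. So there is no ``paper's own proof'' to compare against.

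Your argument is correct and is precisely the standard one found in those references. The essential step is the unit-ball identity $A_{p(x)}(f/\|f\|_{p(x)}) = 1$ for $f \neq 0$, which is valid here because $p^+ < \infty$ guarantees that $\mu \mapsto A_{p(x)}(f/\mu)$ is continuous and strictly decreasing on $(0,\infty)$ (dominated convergence), so the infimum defining the Luxemburg norm is attained with equality. After that, your factorization $|f|^{p(x)} = \lambda^{p(x)}(|f|/\lambda)^{p(x)}$ and the pointwise bound $\min\{\lambda^{p^-},\lambda^{p^+}\} \le \lambda^{p(x)} \le \max\{\lambda^{p^-},\lambda^{p^+}\}$ finish the job immediately. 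The alternative route you sketch (avoiding the equality by arguing each inequality separately via a contradiction on the infimum) also works, but the version via the modular identity is cleaner and is what the cited sources do.
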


\begin{lemma}
Let $p(x)$ and $q(x)$ satisfy \eqref{2.1} and \eqref{2.2}.  If $p(x)\ge q(x)$ a.e. in $\Omega$, then there is continuous embedding $L^{p(x)}(\Omega)\hookrightarrow L^{q(x)}(\Omega)$ and  the embedding constant is less or equal to $1+|\Omega|$.
\end{lemma}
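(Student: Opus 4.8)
The plan is to prove the continuous embedding $L^{p(x)}(\Omega)\hookrightarrow L^{q(x)}(\Omega)$ under the hypothesis $p(x)\ge q(x)$ a.e., by a direct modular argument combined with the definition of the Luxemburg norm. First I would fix $f\in L^{p(x)}(\Omega)$ with $f\ne 0$, and normalize: set $\lambda=\|f\|_{p(x)}$, so that by Lemma~\ref{lem2.1} (or directly by the definition of the norm and the fact that the modular is finite and the infimum is attained in the unit-ball sense) we have $A_{p(x)}(f/\lambda)=\int_\Omega |f/\lambda|^{p(x)}\,dx\le 1$. The key pointwise observation is a splitting of $\Omega$ into the set $\Omega_1=\{x\in\Omega: |f(x)/\lambda|\le 1\}$ and $\Omega_2=\{x\in\Omega: |f(x)/\lambda|> 1\}$. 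On $\Omega_1$, since $|f/\lambda|\le 1$ and $q(x)\le p(x)$, we have $|f/\lambda|^{q(x)}\le 1\le 1$, and in fact $|f/\lambda|^{q(x)}\le 1$ gives the crude bound $\int_{\Omega_1}|f/\lambda|^{q(x)}\,dx\le |\Omega|$; alternatively one keeps $|f/\lambda|^{q(x)}\le |f/\lambda|^{q^-}$ — but the cleanest route is just $\le 1$ pointwise. On $\Omega_2$, since $|f/\lambda|>1$ and $q(x)\le p(x)$, we get $|f/\lambda|^{q(x)}\le |f/\lambda|^{p(x)}$, hence $\int_{\Omega_2}|f/\lambda|^{q(x)}\,dx\le \int_{\Omega_2}|f/\lambda|^{p(x)}\,dx\le A_{p(x)}(f/\lambda)\le 1$.

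Combining the two pieces yields $A_{q(x)}(f/\lambda)=\int_\Omega |f/\lambda|^{q(x)}\,dx\le |\Omega|+1$. Now I would exploit the homogeneity-type scaling of the modular: for $\mu=(1+|\Omega|)\lambda$ one checks that $A_{q(x)}\!\left(f/\mu\right)=\int_\Omega (1+|\Omega|)^{-q(x)}|f/\lambda|^{q(x)}\,dx\le (1+|\Omega|)^{-1}\int_\Omega |f/\lambda|^{q(x)}\,dx\le (1+|\Omega|)^{-1}(1+|\Omega|)=1$, where I used $(1+|\Omega|)^{-q(x)}\le (1+|\Omega|)^{-1}$ because $q(x)\ge 1$ and $1+|\Omega|\ge 1$. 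By the definition of the Luxemburg norm, $A_{q(x)}(f/\mu)\le 1$ forces $\|f\|_{q(x)}\le \mu=(1+|\Omega|)\|f\|_{p(x)}$. This gives both the embedding and the stated bound on the embedding constant; the case $f=0$ is trivial.

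The step I expect to require the most care is the very first one — justifying that $A_{p(x)}(f/\lambda)\le 1$ when $\lambda=\|f\|_{p(x)}$. This is the standard "the unit ball of the modular is closed" fact for variable-exponent Lebesgue spaces: by definition of the infimum there is a sequence $\lambda_n\downarrow \lambda$ with $A_{p(x)}(f/\lambda_n)\le 1$, and since $|f/\lambda_n|^{p(x)}\uparrow |f/\lambda|^{p(x)}$ pointwise a.e., the monotone convergence theorem gives $A_{p(x)}(f/\lambda)\le 1$. (If one prefers to avoid this, one can instead work with an arbitrary $\lambda>\|f\|_{p(x)}$, run the same argument to get $\|f\|_{q(x)}\le (1+|\Omega|)\lambda$, and then let $\lambda\downarrow \|f\|_{p(x)}$.) Everything else is elementary pointwise estimation and the monotonicity of $t\mapsto t^{s}$ for $t>1$ versus $t\le 1$; no use of Lemma~\ref{lem2.1} is strictly needed, though it can shorten the normalization step.
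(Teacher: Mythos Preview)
Your argument is correct. The normalization $\lambda=\|f\|_{p(x)}$ with $A_{p(x)}(f/\lambda)\le 1$, the split into $\{|f/\lambda|\le 1\}$ and $\{|f/\lambda|>1\}$, and the rescaling by $1+|\Omega|$ using $q(x)\ge 1$ all go through exactly as you wrote them; the monotone-convergence justification (or the alternative $\lambda\downarrow\|f\|_{p(x)}$ limit) for the unit-ball modular inequality is also fine.

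As for comparison: the paper does not actually prove this lemma. It is stated as a known fact about variable-exponent Lebesgue spaces, with the background references \cite{Fan1,Fan2} given at the start of Section~2 for the definition and basic properties of $L^{p(x)}(\Omega)$. Your proof is precisely the standard direct modular argument one finds in that literature (e.g.\ Fan--Zhao), so there is nothing to contrast methodologically---you have supplied the omitted details.
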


Let us follow from the proof of Theorem 3.2 in \cite{AFP2021} or Theorem   3.3 in \cite{SA2017}, the local existence of solutions to problem \eqref{1.1} without the proof is as follows.
\begin{theorem}\label{Theo}
Suppose that $u_0(x)\in H_0^2(\Omega)$, $u_1(x)\in L^2(\Omega)$, and 
\begin{equation}
\label{add827}
2\le m^-\leq m(x)\leq m^+<\begin{cases}
\infty&\text{ for }N\le  4;\\
2N/(N-4)&\text{ for }N\ge 5;
\end{cases}
\end{equation}
\begin{equation}
\label{1add827}
2<p^-\leq p(x)\leq p^+<\begin{cases}
\infty&\text{ for }N\le  4;\\
2(N-2)/(N-4)&\text{ for }N\ge 5;
\end{cases}
\end{equation}
then there exists a unique local weak solution $u:=u(x,t)\in L^\infty(0,T;H_0^2(\Omega))$ for problem $(\ref{1.1})$ with
\[u_t\in L^\infty(0,T;L^2(\Omega))\cap L^{m^-}(0,T;L^{m(x)}(\Omega))\cap L^2(0,T;H_0^1(\Omega)).\]
\end{theorem}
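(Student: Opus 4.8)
Since this statement is quoted verbatim from \cite{AFP2021,SA2017}, I only indicate the route I would follow. The plan is to combine a Faedo--Galerkin scheme for an auxiliary problem with the Banach contraction principle. Fix $T>0$ and work in the Banach space
\[
\mathcal{H}_T=\Big\{v:\ v\in L^\infty(0,T;H_0^2(\Omega)),\ v_t\in L^\infty(0,T;L^2(\Omega))\cap L^2(0,T;H_0^1(\Omega))\Big\}
\]
with its natural norm. For $v$ in a closed ball $B_R\subset\mathcal{H}_T$, consider the problem
\[
u_{tt}+\Delta^2u-\Delta u_t+|u_t|^{m(x)-2}u_t=F(v):=|v|^{p(x)-2}v+M(\|\nabla v\|_2^2)\Delta v,
\]
with the initial and boundary data of \eqref{1.1}. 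This is linear in $u$ apart from the monotone term $|u_t|^{m(x)-2}u_t$; the Kirchhoff term is transferred to the right-hand side since $\|\Delta v\|_2=\|v\|_{H_0^2}$ is controlled on $B_R$. Using Lemma \ref{lem2.1} together with the Sobolev embeddings allowed by \eqref{add827}--\eqref{1add827} — these ranges are precisely what makes $|v|^{p(x)-2}v\in L^2(\Omega)$ and the damping term admissible — and the fact that $M$ is a bounded locally Lipschitz function of $\|\nabla v\|_2^2$ on $B_R$, one checks that $F:B_R\to L^\infty(0,T;L^2(\Omega))$ is bounded and locally Lipschitz.

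For the auxiliary problem I would take the basis of eigenfunctions of $\Delta^2$ under clamped boundary conditions, form the Galerkin approximants $u^n$, and test the $n$-th ODE system with $u^n_t$, obtaining
\[
\frac{1}{2}\frac{d}{dt}\Big(\|u^n_t\|_2^2+\|\Delta u^n\|_2^2\Big)+\|\nabla u^n_t\|_2^2+\int_\Omega|u^n_t|^{m(x)}\,dx=\int_\Omega F(v)\,u^n_t\,dx.
\]
Bounding the right-hand side by $\|F(v)\|_2\|u^n_t\|_2$ and applying Young's and Gronwall's inequalities yields bounds for $u^n$ in $\mathcal{H}_T$ and for $\int_0^T\!\int_\Omega|u^n_t|^{m(x)}$ uniform in $n$; an estimate of $u^n_{tt}$ in a dual space follows from the equation. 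Passing to the limit, the only delicate point is identifying the weak limit of $|u^n_t|^{m(x)-2}u^n_t$, done by the usual monotonicity (Minty--Browder) argument adapted to the variable exponent, using strong convergence of $u^n_t$ in $L^2(0,T;L^2(\Omega))$ from an Aubin--Lions compactness argument and weak lower semicontinuity in the energy identity. Uniqueness for this $u$-problem follows by subtracting two solutions, testing with the difference of the time derivatives, and discarding the damping difference by monotonicity.

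Let $\Phi(v)=u$ denote the solution operator just constructed. From the energy estimate $\|\Phi(v)\|_{\mathcal{H}_T}$ is bounded in terms of the data, $R$ and $T$, so for $R$ large and $T$ small $\Phi$ maps $B_R$ into itself. For the contraction, take $v_1,v_2\in B_R$, put $u_i=\Phi(v_i)$ and $w=u_1-u_2$, which solves
\[
w_{tt}+\Delta^2w-\Delta w_t+\big(|u_{1t}|^{m(x)-2}u_{1t}-|u_{2t}|^{m(x)-2}u_{2t}\big)=F(v_1)-F(v_2);
\]
testing with $w_t$, the damping difference has a favourable sign and is dropped, while $\|F(v_1)-F(v_2)\|_2\le C(R)\|v_1-v_2\|_{\mathcal{H}_T}$ follows from the pointwise inequality $\big||a|^{p-2}a-|b|^{p-2}b\big|\le c\big(|a|^{p-2}+|b|^{p-2}\big)|a-b|$ with $p=p(x)$, H\"older's inequality in variable-exponent spaces, Lemma \ref{lem2.1}, and the Lipschitz dependence of $M(\|\nabla\cdot\|_2^2)$ on $B_R$. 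Gronwall then gives $\|w\|_{\mathcal{H}_T}\le C(R)\,T^{\theta}\|v_1-v_2\|_{\mathcal{H}_T}$ for some $\theta>0$, so $\Phi$ is a contraction when $T$ is small; its fixed point is the unique local weak solution, and the membership $u_t\in L^{m^-}(0,T;L^{m(x)}(\Omega))$ is read off from the uniform modular bound on $|u_t|^{m(x)}$ via Lemma \ref{lem2.1}.

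I expect the main obstacle to be everything caused by the variable exponents: proving the local Lipschitz estimate for the source term and identifying the weak limit of the nonlinear damping term in the $L^{m(x)}$ setting, all while keeping each Sobolev embedding strictly within the ranges \eqref{add827}--\eqref{1add827}; the Kirchhoff and strong-damping terms, by contrast, are handled by routine energy estimates.
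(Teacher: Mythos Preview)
Your sketch is sound and follows exactly the route the paper points to: the paper does not prove this theorem at all but merely states it, referring the reader to Theorem~3.2 in \cite{AFP2021} and Theorem~3.3 in \cite{SA2017}, both of which proceed precisely by the Faedo--Galerkin/monotonicity argument for an auxiliary linear-in-$u$ problem combined with the Banach contraction principle that you outline. Your only addition to those references is to fold the Kirchhoff term $M(\|\nabla v\|_2^2)\Delta v$ into the frozen source $F(v)$, which is the natural adaptation since \cite{AFP2021} treats the equation without that term; this is harmless because $M$ is locally Lipschitz and $\|\nabla v\|_2$ is controlled on $B_R$.
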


Define in the sequel 
\[\alpha_1:=(B_1^2)^{\frac{-2}{p^--2}},\quad E_1:=\Big(\frac12-\frac{1}{p^-}\Big)\alpha_1^{\frac{p^-}{2}}\]
with $B_1=\max\big\{1,B\big\}$, here $B$ is the embedding constant to the embedding $H_0^2(\Omega)\hookrightarrow L^{p(x)}(\Omega)$, i.e. there exists an optimal constant $B$ such that
\begin{equation}
\label{Eq3}
\|u\|_{p(x)}\leq B\|\Delta u\|_2\quad \forall u\in H_0^2(\Omega).
\end{equation}

Define the energy functional
\begin{equation}\label{3.1}
\begin{split}
E(t)=\frac{1}{2}\|u_t\|_{2}^{2}+\frac{1}{2}\|\Delta u\|_2^2+\frac{a}{2}\|\nabla u\|_2^2+\frac{b}{2(\gamma+1)}\|\nabla u\|_2^{2(\gamma+1)}-\int_\Omega\frac{1}{p(x)}|u|^{p(x)}dx.
\end{split}
\end{equation}
A direct computation implies 
\begin{equation}\label{3.2}
E'(t)=-\int_\Omega|u_t|^{m(x)}dx-\|\nabla u_t\|_2^2\leq 0.
\end{equation}
\begin{lemma}\label{lem3.2}
Let $E(0)<E_1$ and 
\begin{equation}
\label{2add827}
2<p^-\leq p(x)\leq p^+<\begin{cases}
\infty&\text{ for }N\le  4;\\
2N/(N-4)&\text{ for }N\ge 5.
\end{cases}
\end{equation}
Assume that $u$ is a solution for problem $(\ref{1.1})$, then
\begin{enumerate}[$(1)$]
  \item for  $B_1^2\|\Delta u_0\|^2_2>\alpha_1$, there exists a positive constant $\alpha_2>\alpha_1$ such that
\begin{equation}\label{3.5}
B_1^2\|\Delta u\|_2^2\geq\alpha_2\quad\forall t\ge0.
\end{equation}
  \item for   $B_1^2\|\Delta u_0\|^2_2<\alpha_1$, there exists a positive constant $0<\tilde{\alpha_2}<\alpha_1$ such that
\begin{equation}\label{5.1}
B_1^2\|\Delta u\|^2_2\le\tilde{\alpha_2}\quad\forall t\ge 0.
\end{equation}
\end{enumerate}
\end{lemma}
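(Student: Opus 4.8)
The plan is to reduce the statement to a one‑variable calculus picture for the quantity $\eta(t):=B_1^2\|\Delta u(t)\|_2^2$. First I would prove an energy lower bound of the form $E(t)\ge H(\eta(t))$ with $H$ explicit. Starting from \eqref{3.1}, one discards the nonnegative terms $\frac12\|u_t\|_2^2$, $\frac a2\|\nabla u\|_2^2$, $\frac{b}{2(\gamma+1)}\|\nabla u\|_2^{2(\gamma+1)}$, uses $p(x)\ge p^-$ to bound $\int_\Omega\frac{1}{p(x)}|u|^{p(x)}\,dx\le\frac{1}{p^-}A_{p(x)}(u)$, and estimates $A_{p(x)}(u)$ by Lemma \ref{lem2.1} together with \eqref{Eq3}: since $\|u\|_{p(x)}\le B\|\Delta u\|_2\le B_1\|\Delta u\|_2=\eta^{1/2}$, one gets $A_{p(x)}(u)\le\eta^{p^-/2}$ when $\eta\le1$ and $A_{p(x)}(u)\le\eta^{p^+/2}$ when $\eta\ge1$ (the split being according to whether $\|u\|_{p(x)}\le1$ or $\ge1$). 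This yields $E(t)\ge H(\eta(t))$, where
\[
H(s):=\begin{cases}
\dfrac{1}{2B_1^2}\,s-\dfrac{1}{p^-}\,s^{p^-/2}, & 0\le s\le 1,\\[3mm]
\dfrac{1}{2B_1^2}\,s-\dfrac{1}{p^-}\,s^{p^+/2}, & s\ge 1.
\end{cases}
\]

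Next I would record the shape of $H$. Because $p^->2$ and $B_1\ge1$, the number $\alpha_1=(B_1^2)^{-2/(p^--2)}$ satisfies $\alpha_1\le1$, and a direct differentiation shows that $H$ is continuous, strictly increasing on $[0,\alpha_1]$ and strictly decreasing on $[\alpha_1,\infty)$ with $H(s)\to-\infty$ as $s\to\infty$. Moreover the algebraic identity $\frac{1}{B_1^2}\alpha_1=\alpha_1^{p^-/2}$ gives $H(\alpha_1)=(\frac12-\frac1{p^-})\alpha_1^{p^-/2}=E_1$, so $E_1$ is the global maximum of $H$; thus $H$ maps $[0,\alpha_1]$ bijectively onto $[0,E_1]$ and $[\alpha_1,\infty)$ bijectively onto $(-\infty,E_1]$.

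The last step is a barrier argument based on $E'(t)\le0$ (recall \eqref{3.2}, so $E(t)\le E(0)<E_1$ for all $t$) and the continuity of $t\mapsto\|\Delta u(t)\|_2$, hence of $\eta$, on the interval of existence $[0,T)$ provided by Theorem \ref{Theo}. For part (1) we have $\eta(0)>\alpha_1$; if $\eta(t_1)\le\alpha_1$ for some $t_1$, the intermediate value theorem produces $\tau$ with $\eta(\tau)=\alpha_1$, whence $E(0)\ge E(\tau)\ge H(\alpha_1)=E_1$, a contradiction, so $\eta(t)>\alpha_1$ for all $t$. Letting $\alpha_2$ be the unique point of $(\alpha_1,\infty)$ with $H(\alpha_2)=E(0)$ (which exists and satisfies $\alpha_2>\alpha_1$ because $E(0)<E_1$), the chain $H(\eta(t))\le E(t)\le E(0)=H(\alpha_2)$ and the strict decrease of $H$ on $[\alpha_1,\infty)$ give $\eta(t)\ge\alpha_2$, i.e. \eqref{3.5}. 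Part (2) is the mirror image: from $\eta(0)<\alpha_1$ the same intermediate-value argument forces $\eta(t)<\alpha_1$ for all $t$; moreover $\eta(0)<\alpha_1$ gives $0\le H(\eta(0))\le E(0)<E_1$, so there is a unique $\tilde\alpha_2\in[0,\alpha_1)$ with $H(\tilde\alpha_2)=E(0)$, and $H(\eta(t))\le E(0)=H(\tilde\alpha_2)$ together with the strict increase of $H$ on $[0,\alpha_1)$ yields $\eta(t)\le\tilde\alpha_2$, i.e. \eqref{5.1}.

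I expect the only genuinely delicate point to be the variable-exponent estimate in the first step: since Lemma \ref{lem2.1} controls the modular $A_{p(x)}(u)$ by $\max\{\|u\|_{p(x)}^{p^-},\|u\|_{p(x)}^{p^+}\}$ rather than by a single power of the norm, the comparison function $H$ must be defined piecewise, and one must verify (as in the second step, which is where the choice $B_1=\max\{1,B\}$ is used) that $\alpha_1\le1$, so that the critical behaviour of $H$ takes place on the branch involving only $p^-$ and $E_1$ indeed comes out as the maximum of $H$. A minor auxiliary point is the continuity of $t\mapsto\|\Delta u(t)\|_2$ on $[0,T)$, which is part of the regularity of the weak solution in Theorem \ref{Theo}; everything else is elementary calculus and the monotonicity $E(t)\le E(0)$.
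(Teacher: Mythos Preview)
Your proposal is correct and follows essentially the same route as the paper: the paper also derives the lower bound $E(t)\ge G(\alpha)$ with $\alpha=B_1^2\|\Delta u\|_2^2$ and the same piecewise comparison function (your $H$ is their $G$), analyzes its shape to find the unique maximum $G(\alpha_1)=E_1$, defines $\alpha_2,\tilde\alpha_2$ by $G(\alpha_2)=G(\tilde\alpha_2)=E(0)$, and concludes by a continuity/contradiction argument using $E(t)\le E(0)$. Your presentation is slightly more careful in one spot---you explicitly note that in case (2) the hypothesis $\eta(0)<\alpha_1$ forces $E(0)\ge H(\eta(0))\ge 0$, which is needed for $\tilde\alpha_2$ to exist on the increasing branch---whereas the paper states the existence of both $\alpha_2$ and $\tilde\alpha_2$ directly from $E(0)<E_1$; but this is a minor expository difference, not a different method.
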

\begin{proof}
Using (\ref{3.1}), Lemma \ref{lem2.1} and (\ref{Eq3}) yields  that
\begin{equation}\label{3.6}
\begin{split}
E(t)&\geq \frac{1}{2}\|\Delta u\|_2^2-\frac{1}{p^-}\max\Big\{\|u\|_{p(x)}^{p^+},\|u\|_{p(x)}^{p^-}\Big\}\\
&\geq\frac{1}{2}\|\Delta u\|_2^2-\frac{1}{p^-}\max\Big\{B^{p^+}\|\Delta u\|_2^{p^+},B^{p^-}\|\Delta u\|_2^{p^-}\Big\}\\
&\geq\frac{1}{2B_1^2}\alpha-\frac{1}{p^-}\max\Big\{\alpha^{\frac{p^+}{2}},\alpha^{\frac{p^-}{2}}\Big\}:=G(\alpha),
\end{split}
\end{equation}
where $\alpha:=\alpha(t)=B_1^2\|\Delta u\|_2^2$. By direct computations, $G(\alpha)$ satisfies the following:
\begin{align*}
G'(\alpha)&= \begin{cases}
\frac{1}{2B_1^2}-\frac{p^+}{2p^-}\alpha^{\frac{p^+-2}{2}}<0&\alpha>1;\\
\frac{1}{2B_1^2}-\frac{1}{2}\alpha^{\frac{p^--2}{2}}&0<\alpha<1;
\end{cases} \\
G'_+(1)&=\frac{1}{2B_1^2}-\frac{p^+}{2p^-}<0,~G'_-(1) =\frac{1}{2B_1^2}-\frac{1}{2}<0;\\
G'(\alpha_1)&=0,~0<\alpha_1<1.
\end{align*}
Thus, $G(\alpha)$ is strictly increasing for $0<\alpha<\alpha_1$,  strictly decreasing for $\alpha_1<\alpha$, $G(\alpha)\rightarrow -\infty$ as  $\alpha\rightarrow +\infty$, and $G(\alpha_1)=E_1$.
Since $E(0)<E_1$, there exist  $\alpha_2$ and $\tilde{\alpha_2}$ with $\alpha_2>\alpha_1>\tilde{\alpha_2}$  such that $G(\alpha_2)=G(\tilde{\alpha_2})=E(0)$. Set $\alpha_0:=B_1^2\|\Delta u_0\|_2^2$,  then 
\begin{equation}
\label{2add817}
G(\alpha_0)\leq E(0)=G(\alpha_2)=G(\tilde{\alpha_2}).
\end{equation}
\begin{enumerate}[$(1)$]
  \item  if  $B_1^2\|\Delta u_0\|^2_2>\alpha_1$, then \eqref{2add817} implies 
$\alpha_0\geq \alpha_2$. To prove $(\ref{3.5})$, we suppose by contradiction that for some $t_0>0$,
$\alpha(t_0)<\alpha_2.$ The continuity of $\alpha(t)$ illustrates that we could choose $t_0$
such that $\alpha_1<\alpha(t_0)<\alpha_2$, we have
$E(0)=G(\alpha_2)<G(\alpha(t_0))\leq E(t_0),$  which contradicts (\ref{3.2}).
  \item  if  $B_1^2\|\Delta u_0\|^2_2<\alpha_1$, then \eqref{2add817} implies 
$\alpha_0\le\tilde{\alpha_2}$. Similar to (1), we suppose by contradiction that for some $t^0>0$,
$\alpha(t^0)>\tilde{\alpha_2}.$ The continuity of $\alpha(t)$ illustrates that we could choose $t^0$
such that $\tilde{\alpha_2}<\alpha(t^0)<\alpha_1$, we have
$E(0)=G(\tilde{\alpha_2})<G(\alpha(t^0))\leq E(t^0),$  which contradicts (\ref{3.2}).
\end{enumerate}
\end{proof}

\begin{lemma}\label{lem3.3}
Set $H(t)=E_2-E(t)$ for $t\geq 0$, where $E_2\in(E(0),E_1)$ is sufficiently close to $E(0)$.
 If all conditions of Lemma $\ref{lem3.2}(1)$ hold, for all $t\geq0$,
\begin{equation}\label{3.7}
0<H(0)\leq H(t)\leq \int_\Omega\frac{1}{p(x)}|u|^{p(x)}dx\leq \frac{1}{p^-}\int_\Omega|u|^{p(x)}dx.
\end{equation}
\end{lemma}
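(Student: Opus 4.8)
The plan is to establish the four inequalities in \eqref{3.7} from left to right, three of which are immediate. Since $E_2>E(0)$ by assumption, $H(0)=E_2-E(0)>0$. From the energy identity \eqref{3.2} we have $H'(t)=-E'(t)\ge 0$, so $H$ is nondecreasing and $H(t)\ge H(0)>0$ for every $t\ge 0$. The rightmost inequality follows from $p(x)\ge p^-$ a.e.\ in $\Omega$, which gives $\tfrac{1}{p(x)}\le\tfrac{1}{p^-}$ and hence $\int_\Omega\tfrac{1}{p(x)}|u|^{p(x)}\,dx\le\tfrac{1}{p^-}\int_\Omega|u|^{p(x)}\,dx$. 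Note that only $E(0)<E_2<E_1$ is used here; the extra requirement that $E_2$ be close to $E(0)$ is not needed for this lemma.

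The only substantive step is the middle inequality $H(t)\le\int_\Omega\tfrac{1}{p(x)}|u|^{p(x)}\,dx$. Substituting the definition \eqref{3.1} of $E(t)$ into $H(t)=E_2-E(t)$, this is equivalent to
\[
E_2-\frac12\|u_t\|_2^2-\frac12\|\Delta u\|_2^2-\frac a2\|\nabla u\|_2^2-\frac{b}{2(\gamma+1)}\|\nabla u\|_2^{2(\gamma+1)}\le 0 .
\]
Since $\|u_t\|_2^2,\ \|\nabla u\|_2^2,\ \|\nabla u\|_2^{2(\gamma+1)}\ge 0$, it suffices to show $\tfrac12\|\Delta u\|_2^2\ge E_2$ for all $t\ge 0$. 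Here I would invoke Lemma \ref{lem3.2}(1): under its hypotheses \eqref{3.5} holds, i.e.\ $B_1^2\|\Delta u\|_2^2\ge\alpha_2>\alpha_1$ for all $t\ge 0$, so $\tfrac12\|\Delta u\|_2^2\ge\tfrac{\alpha_2}{2B_1^2}>\tfrac{\alpha_1}{2B_1^2}$.

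It then remains to check $\tfrac{\alpha_1}{2B_1^2}\ge E_2$. Using $\alpha_1=(B_1^2)^{-2/(p^--2)}$, equivalently $\alpha_1^{(p^--2)/2}=1/B_1^2$, one rewrites $\tfrac{\alpha_1}{2B_1^2}=\tfrac12\,\alpha_1\cdot\alpha_1^{(p^--2)/2}=\tfrac12\,\alpha_1^{p^-/2}$. Since $\tfrac12>\tfrac12-\tfrac1{p^-}$, this yields $\tfrac{\alpha_1}{2B_1^2}=\tfrac12\alpha_1^{p^-/2}>\big(\tfrac12-\tfrac1{p^-}\big)\alpha_1^{p^-/2}=E_1>E_2$, where the last step is the hypothesis $E_2\in(E(0),E_1)$. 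Chaining the estimates gives $\tfrac12\|\Delta u\|_2^2>E_2$, which closes the argument.

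I do not expect a real obstacle: the proof is merely the assembly of the monotonicity of $H$ coming from \eqref{3.2}, the lower bound \eqref{3.5} supplied by Lemma \ref{lem3.2}(1), and an elementary manipulation of the constants $\alpha_1$ and $E_1$. The one point requiring a little care is using those definitions consistently — in particular that $\max\{\alpha_1^{p^+/2},\alpha_1^{p^-/2}\}=\alpha_1^{p^-/2}$ because $0<\alpha_1<1$, exactly the regime recorded while computing $G(\alpha_1)=E_1$ in the proof of Lemma \ref{lem3.2}.
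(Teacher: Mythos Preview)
Your proof is correct and follows essentially the same route as the paper: monotonicity of $H$ from \eqref{3.2}, dropping the nonnegative kinetic and gradient terms in \eqref{3.1}, invoking \eqref{3.5} to bound $\|\Delta u\|_2^2$ from below, and then the elementary identity $\tfrac{\alpha_1}{2B_1^2}=\tfrac12\alpha_1^{p^-/2}>E_1>E_2$. Your observation that the ``sufficiently close to $E(0)$'' clause is unnecessary here is also correct and is precisely the content of the Remark following the lemma.
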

\begin{proof}
(\ref{3.2}) implies that $H(t)$ is nondecreasing with respect to $t$, thus for  $t\geq 0$, $H(t)\geq H(0)=E_2-E(0)>0$. (\ref{3.1}) and (\ref{3.5}) illustrate
 \begin{equation*}
    \begin{split}
      H(t)&\leq E_2-\frac{1}{2}\|\Delta u\|_2^2+\int_\Omega\frac{1}{p(x)}|u|^{p(x)}dx \leq E_2-\frac{\alpha_2}{2B_1^2}+\int_\Omega\frac{1}{p(x)}|u|^{p(x)}dx\\
& \leq E_1-\frac{\alpha_1}{2B_1^2}+\int_\Omega\frac{1}{p(x)}|u|^{p(x)}dx
\leq  \int_\Omega\frac{1}{p(x)}|u|^{p(x)}dx.
    \end{split}
\end{equation*}
\end{proof}

\begin{remark}
In Lemma \ref{lem3.3},  let $H(t)=E_2-E(t)$, where $E_2\in(E(0),E_1)$ is sufficiently close to $E(0)$ be necessary. This necessity can be seen in the proof of Theorem \ref{theo3.1}.
\end{remark}

\begin{lemma}\label{lem2.6}
Let $p^->2(\gamma+1)$  and $m(x)=2$ hold. If the initial datum $u_0\in H_0^2(\Omega)$ and $u_1\in L^2(\Omega)$ such that 
\begin{equation}
\label{61}
0<E(0)<\frac{C}{p^-}\int_\Omega u_0u_1dx.
\end{equation}
Then the weak solution $u$ to problem \eqref{1.1} satisfies 
\begin{equation}
\label{62}
\int_\Omega uu_tdx-\frac{p^-}{C}E(t)\ge \Big(\int_\Omega u_0u_1dx-\frac{p^-}{C}E(0)\Big)e^{Ct}>0
\end{equation}
for any $t\in [0,T),$ where
\begin{equation}
\label{63}
C=\min\Big\{2+p^-,\frac{2p^-(p^--2)a}{1+(2p^-+1)S^2}\Big\}
\end{equation}
with $S$ being the optimal embedding constant of $H_0^1(\Omega)\hookrightarrow L^{2}(\Omega)$.
\end{lemma}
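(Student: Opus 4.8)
The plan is to monitor the functional $\psi(t):=\int_\Omega u\,u_t\,dx$ and to show that $F(t):=\psi(t)-\frac{p^-}{C}E(t)$ satisfies the linear differential inequality $F'(t)\ge C F(t)$ on $[0,T)$. Granting this, $\big(e^{-Ct}F(t)\big)'\ge 0$ gives $F(t)\ge F(0)e^{Ct}$; and since assumption \eqref{61} is precisely the statement that $F(0)=\int_\Omega u_0u_1\,dx-\frac{p^-}{C}E(0)>0$, the estimate \eqref{62} follows at once.

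To obtain the differential inequality I would first differentiate $\psi$. Using the equation with $m(x)\equiv 2$ in the form $u_{tt}=-\Delta^2u+M(\|\nabla u\|_2^2)\Delta u+\Delta u_t-u_t+|u|^{p(x)-2}u$, pairing with $u$, and integrating by parts twice — all boundary contributions vanish because $u=\partial u/\partial\nu=0$ on $\partial\Omega$ — gives
\[
\psi'(t)=\|u_t\|_2^2-\|\Delta u\|_2^2-a\|\nabla u\|_2^2-b\|\nabla u\|_2^{2(\gamma+1)}-\int_\Omega\nabla u\cdot\nabla u_t\,dx-\psi(t)+\int_\Omega|u|^{p(x)}\,dx .
\]
Next I would expand $p^-E(t)$ via \eqref{3.1} and use \eqref{3.2} (which for $m(x)\equiv 2$ reads $E'(t)=-\|u_t\|_2^2-\|\nabla u_t\|_2^2$) to evaluate $-\frac{p^-}{C}E'(t)$; collecting the contributions to $F'(t)-CF(t)=\psi'(t)-\frac{p^-}{C}E'(t)-C\psi(t)+p^-E(t)$ yields
\begin{align*}
F'(t)-CF(t)&=A\|u_t\|_2^2+\tfrac{p^--2}{2}\|\Delta u\|_2^2+\tfrac{(p^--2)a}{2}\|\nabla u\|_2^2+\tfrac{b(p^--2(\gamma+1))}{2(\gamma+1)}\|\nabla u\|_2^{2(\gamma+1)}\\
&\quad+\tfrac{p^-}{C}\|\nabla u_t\|_2^2-\int_\Omega\nabla u\cdot\nabla u_t\,dx-(C+1)\psi(t)+\int_\Omega\Big(1-\tfrac{p^-}{p(x)}\Big)|u|^{p(x)}\,dx ,
\end{align*}
where $A:=1+\frac{p^-}{C}+\frac{p^-}{2}$. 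Here the $\|\Delta u\|_2^2$-term is nonnegative, the last integral is nonnegative since $p(x)\ge p^-$, and the Kirchhoff term $\tfrac{b(p^--2(\gamma+1))}{2(\gamma+1)}\|\nabla u\|_2^{2(\gamma+1)}$ is nonnegative precisely by the hypothesis $p^->2(\gamma+1)$; I would discard all three.

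It then remains to prove $A\|u_t\|_2^2+\tfrac{(p^--2)a}{2}\|\nabla u\|_2^2+\tfrac{p^-}{C}\|\nabla u_t\|_2^2-\int_\Omega\nabla u\cdot\nabla u_t\,dx-(C+1)\psi(t)\ge 0$. I would estimate $\big|\int_\Omega\nabla u\cdot\nabla u_t\,dx\big|\le\|\nabla u\|_2\|\nabla u_t\|_2$ and, via the embedding $H_0^1(\Omega)\hookrightarrow L^2(\Omega)$, $|\psi(t)|\le\|u\|_2\|u_t\|_2\le S\|\nabla u\|_2\|u_t\|_2$; then apply Young's inequality to $\|\nabla u\|_2\|\nabla u_t\|_2$ with weights chosen so the $\|\nabla u_t\|_2^2$-term cancels exactly, namely $\|\nabla u\|_2\|\nabla u_t\|_2\le\tfrac{C}{4p^-}\|\nabla u\|_2^2+\tfrac{p^-}{C}\|\nabla u_t\|_2^2$. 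What remains is the quadratic form $Ax^2+Dy^2-(C+1)S\,xy$ in $x=\|u_t\|_2$, $y=\|\nabla u\|_2$, with $D:=\tfrac{(p^--2)a}{2}-\tfrac{C}{4p^-}$, which is nonnegative if and only if $A,D\ge 0$ and $4AD\ge(C+1)^2S^2$. The bound $C\le\tfrac{2p^-(p^--2)a}{1+(2p^-+1)S^2}$ is equivalent to $D\ge\tfrac{C(2p^-+1)S^2}{4p^-}$ (in particular $D\ge 0$), so $4AD\ge\tfrac{AC(2p^-+1)S^2}{p^-}$; and an elementary computation gives $AC(2p^-+1)-p^-(C+1)^2=-p^-C^2+\big(1+(p^-)^2+\tfrac{p^-}{2}\big)C+2(p^-)^2=:h(C)$, a concave function of $C$ with $h(0)=2(p^-)^2>0$ and $h(2+p^-)=\tfrac12(p^--2)^2\ge 0$, hence $h(C)\ge 0$ for every $0<C\le 2+p^-$. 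Since our $C$ obeys both $C\le 2+p^-$ and the displayed bound, $4AD\ge(C+1)^2S^2$, so $F'(t)\ge CF(t)$, and the proof closes as in the first paragraph.

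The only genuinely delicate point is the bookkeeping: arranging the coefficients so that nonnegativity of the final quadratic form collapses to exactly the stated value of $C$; the two structural hypotheses enter only here, $p^->2(\gamma+1)$ to discard the Kirchhoff term and $C\le 2+p^-$ to close the concavity estimate. As usual, the identity for $\psi'(t)$ (in particular pairing $\Delta^2u$ with $u$) would be justified first on the Galerkin approximations and the resulting inequality passed to the limit.
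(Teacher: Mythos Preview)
Your proof is correct and follows essentially the same strategy as the paper: define $F(t)=\int_\Omega uu_t\,dx-\tfrac{p^-}{C}E(t)$, use the equation and the energy identity to show $F'(t)\ge CF(t)$, and conclude via Gronwall. The only difference is in how the constant $C$ is verified: the paper applies Young's inequality separately to $\int_\Omega uu_t\,dx$ (giving $\tfrac{C}{4p^-}\|u\|_2^2+\tfrac{p^-}{C}\|u_t\|_2^2$), converts $\|\nabla u\|_2^2$ to $\|u\|_2^2$ via Poincar\'e, and reads off the two branches of the minimum directly from the $\|u_t\|_2^2$ and $\|u\|_2^2$ coefficients, whereas you keep $-(C+1)\psi(t)$ together and check nonnegativity of the resulting quadratic form in $(\|u_t\|_2,\|\nabla u\|_2)$ via the concavity argument on $h(C)$---a slightly longer but equally valid route to the same constant.
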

\begin{proof}
The idea of this proof comes from \cite{SLW2019,HL2020,L2021}. It is direct that \begin{equation}
\label{64}
\begin{split}
&\frac{d}{dt}\int_\Omega uu_tdx=\|u_t\|_2^2+\int_\Omega uu_{tt}dx\\
&\quad=\|u_t\|_2^2-\|\Delta u\|_2^2-a\|\nabla u\|_2^2-b\|\nabla u\|_2^{2(\gamma+1)}\\
&\quad\quad-\int_\Omega\nabla u\nabla u_tdx-\int_\Omega u u_tdx+\int_\Omega |u|^{p(x)}dx
\\
&\quad\ge \frac{p^-+2}{2}\|u_t\|_2^2+\frac{p^--2}{2}\|\Delta u\|_2^2+\frac{p^--2}{2}a\|\nabla u\|_2^2\\
&\quad\quad+\frac{p^--2(\gamma+1)}{2(\gamma+1)}b\|\nabla u\|_2^{2(\gamma+1)}-\int_\Omega\nabla u\nabla u_tdx-\int_\Omega u u_tdx-p^-E(t),
\end{split}
\end{equation}
by using the first equality for problem \eqref{1.1} and \eqref{3.1}.  
Taking full advantage of Young's inequality, then
\begin{equation}
\label{65}
\int_\Omega\nabla u\nabla u_tdx\le \frac{C}{4p^-}\|\nabla u\|_2^2+\frac {p^-}{C}\|\nabla u_t\|_2^2.
\end{equation}
\begin{equation}
\label{66}
\int_\Omega u u_tdx\le \frac{C}{4p^-}\| u\|_2^2+\frac {p^-}{C}\| u_t\|_2^2.
\end{equation}
Combining \eqref{64} and \eqref{65} with \eqref{66}, and using the embedding theorem $H_0^1(\Omega)\hookrightarrow L^{2}(\Omega)$, it follows that 
\begin{equation}
\label{67}
\begin{split}
\frac{d}{dt}\int_\Omega uu_tdx&\ge\frac{p^-+2}{2}\|u_t\|_2^2+\Big[\frac{p^--2}{2}a-\frac{C}{4p^-}\Big]\|\nabla u\|_2^2-\frac{C}{4p^-}\| u\|_2^2\\
&\quad+\frac {p^-}{C}\Big(\|\nabla u_t\|_2^2+\|u_t\|_2^2\Big)-p^-E(t)\\
&\ge\frac{p^-+2}{2}\|u_t\|_2^2+\Big\{\Big[\frac{p^--2}{2}a-\frac{C}{4p^-}\Big]\frac{1}{S^2}-\frac{C}{4p^-}\Big\}\| u\|_2^2\\
&\quad+\frac {p^-}{C}\Big(\|\nabla u_t\|_2^2+\|u_t\|_2^2\Big)-p^-E(t).
\end{split}
\end{equation}Define 
\[\Psi(t)=\int_\Omega uu_tdx-\frac {p^-}{C}E(t).\] 
Recalling \eqref{63}, and then combining \eqref{67} with \eqref{3.2}, one has
\begin{equation}
\label{68}
\begin{split}
\frac{d}{dt}\Psi(t)&\ge \frac{p^-+2}{2}\|u_t\|_2^2+\Big\{\Big[\frac{p^--2}{2}a-\frac{C}{4p^-}\Big]\frac{1}{S^2}-\frac{C}{4p^-}\Big\}\| u\|_2^2-p^-E(t)\\
&\ge C\Big(\frac12\|u_t\|_2^2+\frac12\|u\|_2^2-\frac {p^-}{C}E(t)\Big)\ge C\Psi(t).
\end{split}
\end{equation}
Noticing that $\Psi(0)=\int_\Omega u_0u_1dx-\frac {p^-}{C}E(0)>0$,  by Gronwall's inequality, thus 
\[\Psi(t)\ge e^{Ct}\Psi(0)>0.\]
This proof is complete.
\end{proof}

\begin{lemma}\label{lem2.5}
\upcite{L1973,LG2017}Suppose a positive, twice-differentiable function $\psi (t)$ satisfies the inequality
\begin{equation*}
\psi''(t)\psi(t)-(1+\theta)({\psi'}(t))^2\ge 0,
\end{equation*}
where $\theta>0.$ If $\psi(0)>0,~\psi'(0)>0$, then $\psi(t)\to\infty$ as $t\to t_1\le t_2=\frac{\psi(0)}{\theta\psi'(0)}$.
\end{lemma}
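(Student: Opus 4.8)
The final statement to prove is Lemma~\ref{lem2.5}, the classical Levine-type concavity lemma: if $\psi>0$ is twice differentiable with $\psi''\psi-(1+\theta)(\psi')^2\ge 0$ and $\psi(0)>0,\ \psi'(0)>0$, then $\psi$ blows up no later than $t_2=\psi(0)/(\theta\psi'(0))$.

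\smallskip

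The plan is to exploit the concavity hidden in the hypothesis by studying the auxiliary function $\varphi(t):=\psi(t)^{-\theta}$. First I would differentiate twice: $\varphi'(t)=-\theta\,\psi^{-\theta-1}\psi'$ and
\[
\varphi''(t)=-\theta\,\psi^{-\theta-2}\bigl[\psi''\psi-(\theta+1)(\psi')^2\bigr].
\]
The differential inequality in the hypothesis says exactly that the bracket is $\ge 0$, and since $\psi>0$ and $\theta>0$ we get $\varphi''(t)\le 0$ for all $t$ in the interval of existence; that is, $\varphi$ is concave. Next I would record the initial data: $\varphi(0)=\psi(0)^{-\theta}>0$ and $\varphi'(0)=-\theta\,\psi(0)^{-\theta-1}\psi'(0)<0$ because $\psi(0),\psi'(0)>0$. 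Concavity then gives the tangent-line bound $\varphi(t)\le \varphi(0)+\varphi'(0)t$ for $t\ge 0$. The right-hand side is an affine function with negative slope, so it reaches $0$ at
\[
t_2=-\frac{\varphi(0)}{\varphi'(0)}=\frac{\psi(0)^{-\theta}}{\theta\,\psi(0)^{-\theta-1}\psi'(0)}=\frac{\psi(0)}{\theta\,\psi'(0)}.
\]
Since $\varphi=\psi^{-\theta}$ must stay strictly positive wherever $\psi$ is finite and positive, and the concavity bound forces $\varphi(t)\to 0^+$ at some $t_1\le t_2$, the solution $\psi$ cannot remain finite up to $t_2$; hence $\psi(t)\to\infty$ as $t\to t_1^-$ for some $t_1\le t_2$.

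\smallskip

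A few monotonicity remarks make the argument airtight. From $\psi(0)>0$ and $\psi'(0)>0$, and from the inequality $\psi''\psi\ge(1+\theta)(\psi')^2\ge 0$, one sees $\psi''\ge 0$ as long as $\psi>0$, so $\psi'$ is nondecreasing and stays $\ge\psi'(0)>0$; thus $\psi$ is strictly increasing and remains bounded below by $\psi(0)>0$, which keeps $\varphi$ well defined and positive on the existence interval. This justifies dividing by powers of $\psi$ and confirms that the only way the tangent-line estimate can be consistent with $\varphi>0$ is for the existence interval to terminate before $t_2$, at which point $\psi$ escapes to $+\infty$.

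\smallskip

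Honestly, there is no real obstacle here — this is a textbook computation and the references \cite{L1973,LG2017} already contain it. The only point requiring a little care is the logical step from ``$\varphi$ is concave with $\varphi(0)>0$, $\varphi'(0)<0$'' to ``$\psi$ blows up at some $t_1\le t_2$'': one must note that $\psi$, being smooth and positive on its maximal interval of existence $[0,T_{\max})$, forces $\varphi$ continuous and positive there, while the tangent bound makes $\varphi$ hit zero by time $t_2$; reconciling these two facts yields $T_{\max}\le t_2$ and, since $\psi$ is increasing and cannot approach a finite limit at a finite blow-up time without $\psi''$ staying bounded (contradicting $\psi''\psi\ge(1+\theta)(\psi')^2$ with $\psi'$ bounded below), $\psi\to\infty$ as $t\to T_{\max}^-=:t_1\le t_2$. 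I would state this last step briefly rather than belabor it.
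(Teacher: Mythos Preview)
Your argument is correct and is exactly the standard concavity proof: set $\varphi=\psi^{-\theta}$, compute $\varphi''=-\theta\psi^{-\theta-2}\bigl[\psi''\psi-(1+\theta)(\psi')^2\bigr]\le 0$, and use the tangent-line bound for the concave function $\varphi$ to force $\varphi\to 0^+$ (hence $\psi\to\infty$) by time $t_2=\psi(0)/(\theta\psi'(0))$. The paper does not supply its own proof of this lemma; it merely cites \cite{L1973,LG2017}, where precisely this argument appears, so your write-up is in fact more detailed than what the paper offers.
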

\begin{lemma}\label{lem5.4}
\upcite{M1999}Let $E: \mathbb{R}\to\mathbb{R}_+$ be a non-increasing function and $\phi : \mathbb{R}\to\mathbb{R}_+$ be a strictly increasing function of class $C^1$ such that
$\phi(0)=0\text{ and }\phi(t)\to+\infty\text{ as }t\to+\infty.$ 
Assume that there exist $\sigma \geq  0$ and $\omega > 0$ such that:
 \[\int_t^{+\infty}(E(s))^{1+\sigma}\phi^{\prime}(s)ds\leq  \frac{1}{\omega}(E(0))^\sigma E(t),\]
 then $E$ has the following decay property:
\begin{enumerate}[$(1)$]
  \item if $\sigma=0$, then $E(t)\leq  E(0)e^{1-\omega\phi(t)}$ for all $t\geq  0;$
  \item if $\sigma>0$, then $E(t)\leq  E(0)\left(\frac{1+\sigma}{1+\omega\sigma\phi(t)}\right)^{\frac{1}{\sigma}}$ for all $t\geq  0$.
\end{enumerate}
\end{lemma}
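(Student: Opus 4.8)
The plan is to reduce the statement to a scalar differential inequality for the ``tail'' functional
\[\Phi(t):=\int_t^{+\infty}(E(s))^{1+\sigma}\phi'(s)\,ds,\qquad t\ge 0.\]
Taking $t=0$ in the hypothesis gives $\Phi(0)\le\frac1\omega (E(0))^{1+\sigma}<\infty$, and since $0\le E(s)\le E(0)$ by monotonicity and $\phi'$ is continuous, $\Phi$ is finite, non-increasing, and locally absolutely continuous on $[0,\infty)$ with $\Phi'(t)=-(E(t))^{1+\sigma}\phi'(t)$ for a.e.\ $t$. The assumed inequality reads exactly $\Phi(t)\le\frac1\omega (E(0))^{\sigma}E(t)$, i.e.\ $E(t)\ge\omega(E(0))^{-\sigma}\Phi(t)$; inserting this lower bound for $E(t)$ into the formula for $\Phi'(t)$ yields, on the interval $\{\Phi>0\}=[0,t_0)$ (with $t_0\le+\infty$, by monotonicity and continuity of $\Phi$),
\[\Phi'(t)\ \le\ -\big(\omega(E(0))^{-\sigma}\big)^{1+\sigma}\,\Phi(t)^{1+\sigma}\,\phi'(t).\]

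First I would integrate this inequality. For $\sigma=0$ it becomes $\frac{d}{dt}\log\Phi(t)\le-\omega\phi'(t)$, so integrating over $[0,t]$ and using $\phi(0)=0$ gives $\Phi(t)\le\Phi(0)e^{-\omega\phi(t)}\le\frac1\omega E(0)e^{-\omega\phi(t)}$. For $\sigma>0$ I would rewrite the left-hand side as $\frac1\sigma\frac{d}{dt}\big(\Phi(t)^{-\sigma}\big)$; integrating gives $\Phi(t)^{-\sigma}\ge\Phi(0)^{-\sigma}+\sigma\omega^{1+\sigma}(E(0))^{-\sigma(1+\sigma)}\phi(t)$, and combining with $\Phi(0)^{-\sigma}\ge\omega^{\sigma}(E(0))^{-\sigma(1+\sigma)}$ (again from the hypothesis at $t=0$) produces
\[\Phi(t)\ \le\ \frac1\omega (E(0))^{1+\sigma}\big(1+\sigma\omega\phi(t)\big)^{-1/\sigma}.\]
In the degenerate case $t_0<+\infty$ one has $\Phi\equiv0$ on $[t_0,+\infty)$, which forces $E\equiv0$ there ($\phi'>0$ a.e.\ and $E$ non-increasing), so the asserted estimates hold trivially beyond $t_0$ and this case may be set aside.

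The decisive step turns the decay bound for the tail $\Phi$ into one for $E$ itself. Since $E$ is non-increasing, for every $t>r\ge0$
\[\Phi(r)\ \ge\ \int_r^{t}(E(s))^{1+\sigma}\phi'(s)\,ds\ \ge\ (E(t))^{1+\sigma}\big(\phi(t)-\phi(r)\big),\]
hence $(E(t))^{1+\sigma}\le\Phi(r)/(\phi(t)-\phi(r))$. The key is that one should \emph{not} set $r=t$ (monotonicity of $E$ makes that comparison useless), but rather insert the bound on $\Phi(r)$ from the previous step and then \emph{optimize over the free intermediate time} $r\in[0,t)$: minimize $x\mapsto e^{-\omega x}/(\phi(t)-x)$ when $\sigma=0$, and $x\mapsto(1+\sigma\omega x)^{-1/\sigma}/(\phi(t)-x)$ when $\sigma>0$, over $x=\phi(r)$. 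The minimizers are $\phi(r)=\phi(t)-\frac1\omega$ and $\phi(r)=\frac{1}{1+\sigma}\big(\phi(t)-\frac1\omega\big)$ respectively, and substituting them back gives exactly $E(t)\le E(0)e^{1-\omega\phi(t)}$ and $E(t)\le E(0)\big(\frac{1+\sigma}{1+\omega\sigma\phi(t)}\big)^{1/\sigma}$. When $\phi(t)<1/\omega$ the optimal $r$ would be negative, but then the right-hand sides already exceed $E(0)\ge E(t)$, so the estimates hold trivially there too.

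I expect the main obstacle to be conceptual rather than computational: the two integrations and the one-variable minimization are routine, but recognizing that $E(t)$ must be compared with $\Phi$ at an earlier, optimally chosen time $r$ — not at time $t$ — is the crux, and it is precisely that optimization which manufactures the sharp constants $e$ and $(1+\sigma)^{(1+\sigma)/\sigma}$. A minor amount of care is also needed to justify differentiating $\Phi$ (using $0\le E\le E(0)$ and continuity of $\phi'$) and to dispose of the degenerate case $\Phi\equiv0$.
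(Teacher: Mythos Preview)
The paper does not give a proof of this lemma: it is stated in the preliminaries with a citation to Martinez \cite{M1999} and then invoked as a black box in the proof of Theorem~\ref{theo5.4}. Consequently there is no ``paper's own proof'' to compare against.

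Your argument is correct and is, in fact, essentially the original proof from \cite{M1999}. The reduction to a differential inequality for the tail functional $\Phi(t)=\int_t^\infty E^{1+\sigma}\phi'$, the integration of $\Phi'\le -c\,\Phi^{1+\sigma}\phi'$, and then the crucial step of bounding $E(t)^{1+\sigma}\le\Phi(r)/(\phi(t)-\phi(r))$ and optimizing in the intermediate time $r$ are precisely the ingredients Martinez uses. Your identification of the optimizers $\phi(r)=\phi(t)-1/\omega$ (for $\sigma=0$) and $\phi(r)=\frac{1}{1+\sigma}(\phi(t)-1/\omega)$ (for $\sigma>0$), and the observation that for $\phi(t)<1/\omega$ the right-hand sides already dominate $E(0)$, are all accurate. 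The only minor remark is that ``$\phi'>0$ a.e.'' is not quite what ``strictly increasing $C^1$'' gives you, but your actual argument in the degenerate case only uses $\phi(t_1)-\phi(t_0)>0$, which does follow from strict monotonicity, so this is cosmetic.
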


\section{Blow-up results}
In this section, the blow-up phenomenon will be discusse. Moreover, the lifespan of solutions will be derived as well.
\subsection{Blow-up for low initial energy}
By constructing an ordinary differential inequality, let us present the blow-up result for low initial energy as follows: 
\begin{theorem}\label{theo3.1}
Let \eqref{2add827} and $\max\{m^+,2(\gamma+1)\}<p^-$ hold. Provided that 
\[B_1^2\|\Delta u_0\|^2_2>\alpha_1\text{ and }E(0)<E_1,\]
 then the solution $u$ of problem $(\ref{1.1})$ blows up at some  finite time $T^*$ in the sense of  $\lim\limits_{t\rightarrow T^{*-}}\|u\|_{p(x)}=+\infty$ and  the blow-up time $T^*$ can be estimated from above as follows
\[T^*\leq F^{-\frac{\sigma}{1-\sigma}}(0)\frac{M_1}{M_2}\frac{1-\sigma}{\sigma},\]
where \[0<\sigma\leq\min\left\{\frac{p^--m^+}{p^-(m^+-1)},\frac{p^--2}{2p^-},\frac{\gamma}{\gamma+1}\right\}<\frac12,\] 
$M_1$ and $M_2$ will be presented in \eqref{3add827} and \eqref{4add827}, respectively. 
\end{theorem}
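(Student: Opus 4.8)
\emph{Proof strategy.} The plan is to run a concavity-type (differential inequality) argument built on the functional of Lemma~\ref{lem3.3}, adapted to the variable-exponent setting. Fix $\sigma$ in the stated range and a small parameter $\epsilon>0$, and set
\[F(t)=H^{1-\sigma}(t)+\epsilon\int_\Omega uu_t\,dx+\frac{\epsilon}{2}\|\nabla u\|_2^2,\qquad H(t)=E_2-E(t).\]
For $\epsilon$ small, $F(0)\ge H^{1-\sigma}(0)-\epsilon\|u_0\|_2\|u_1\|_2>0$ since $H(0)>0$ by Lemma~\ref{lem3.3}. The third term is inserted on purpose: when we differentiate $\epsilon\int_\Omega uu_t\,dx$, use \eqref{1.1} (as in \eqref{64}) and $\int_\Omega u\Delta u\,dx=-\|\nabla u\|_2^2$, the strong damping $-\Delta u_t$ produces a term $-\epsilon\int_\Omega\nabla u\cdot\nabla u_t\,dx$, which is cancelled exactly by $\tfrac{d}{dt}\big(\tfrac{\epsilon}{2}\|\nabla u\|_2^2\big)=\epsilon\int_\Omega\nabla u\cdot\nabla u_t\,dx$; this is where ``making full use of the strong damping'' enters and is why no loss occurs from that term.

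First I would compute $F'(t)$ using \eqref{3.2} (discarding the nonnegative $(1-\sigma)H^{-\sigma}\|\nabla u_t\|_2^2$ contribution), and then rewrite $\epsilon\int_\Omega|u|^{p(x)}dx$ by means of $\int_\Omega|u|^{p(x)}dx\ge p^-\int_\Omega\tfrac{1}{p(x)}|u|^{p(x)}dx$, the definition \eqref{3.1} of $E(t)$ and $E(t)=E_2-H(t)$. This yields
\[F'(t)\ge(1-\sigma)H^{-\sigma}(t)\!\int_\Omega\!|u_t|^{m(x)}dx+\Xi(t)-\epsilon p^-E_2-\epsilon\!\int_\Omega\! u|u_t|^{m(x)-2}u_t\,dx,\]
where $\Xi(t)$ is a combination, \emph{with strictly positive coefficients}, of $\|u_t\|_2^2$, $\|\Delta u\|_2^2$, $\|\nabla u\|_2^2$, $\|\nabla u\|_2^{2(\gamma+1)}$, $H(t)$, and a nonnegative variable-exponent term $\epsilon\int_\Omega(1-\tfrac{p^-}{p(x)})|u|^{p(x)}dx$ (positivity of the coefficient of $\|\Delta u\|_2^2$ uses $p^->2$, that of $\|\nabla u\|_2^{2(\gamma+1)}$ uses $p^->2(\gamma+1)$). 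The only harmful piece is the constant $-\epsilon p^-E_2$; it is killed by a fixed fraction of the $\|\Delta u\|_2^2$-term via Lemma~\ref{lem3.2}(1): since $\|\Delta u\|_2^2\ge\alpha_2/B_1^2>\alpha_1/B_1^2=\tfrac{2p^-}{p^--2}E_1$ and $E_2<E_1$, one gets $(\tfrac{p^-}{2}-1)\|\Delta u\|_2^2>p^-E_2$ with a definite surplus, leaving a positive multiple of $\|\Delta u\|_2^2$ free; requiring $E_2$ close to $E(0)$ (equivalently, close enough to $E_1$ from below) is exactly what guarantees this surplus together with the constant absorptions of the next paragraph.

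The crux, and the step I expect to be the main obstacle, is controlling the damping term $\epsilon\int_\Omega u|u_t|^{m(x)-2}u_t\,dx$ in the variable-exponent framework. I would apply the pointwise Young inequality with conjugate exponents $m(x)$ and $m(x)/(m(x)-1)$, tuning the parameter so that the resulting $|u_t|^{m(x)}$-contribution is at most $\tfrac12(1-\sigma)H^{-\sigma}(t)\int_\Omega|u_t|^{m(x)}dx$; what remains is a term bounded by $C\epsilon^{m^-}\int_\Omega H^{\sigma(m(x)-1)}(t)|u|^{m(x)}dx$. Using $H(t)\le\tfrac{1}{p^-}\int_\Omega|u|^{p(x)}dx$ (Lemma~\ref{lem3.3}), the monotonicity and positivity of $H$ (which gives $H^{\sigma(m(x)-1)}\le C(H(0))\,H^{\sigma(m^+-1)}$), and Lemma~\ref{lem2.1} together with the embedding $L^{p(x)}(\Omega)\hookrightarrow L^{m(x)}(\Omega)$ to get $\int_\Omega|u|^{m(x)}dx\le C\big(1+(\int_\Omega|u|^{p(x)}dx)^{m^+/p^-}\big)$, the remainder is at most $C\epsilon^{m^-}\big(1+(\int_\Omega|u|^{p(x)}dx)^{\sigma(m^+-1)+m^+/p^-}\big)$. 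The exponent $\sigma(m^+-1)+\tfrac{m^+}{p^-}$ does not exceed $1$ precisely because $\sigma\le\tfrac{p^--m^+}{p^-(m^+-1)}$ --- that is the whole point of this restriction --- so the remainder is $\le C\epsilon^{m^-}(1+\int_\Omega|u|^{p(x)}dx)$, and, after rewriting $\int_\Omega|u|^{p(x)}dx$ via \eqref{3.1} and $E(t)\le E(0)$, for $\epsilon$ small it is absorbed into $\Xi(t)$ (the stray constant absorbed by $H(t)\ge H(0)$ and by $\|\Delta u\|_2^2\gtrsim1$). The outcome is $F'(t)\ge M_2\,\Lambda(t)$ with $\Lambda(t):=\|u_t\|_2^2+\|\Delta u\|_2^2+\|\nabla u\|_2^2+\|\nabla u\|_2^{2(\gamma+1)}+H(t)$ and $M_2>0$ explicit.

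Finally I would establish the reverse bound $F^{1/(1-\sigma)}(t)\le M_1\,\Lambda(t)$. Indeed $F^{1/(1-\sigma)}\le C\big(H+|\int_\Omega uu_t\,dx|^{1/(1-\sigma)}+\|\nabla u\|_2^{2/(1-\sigma)}\big)$; here $H\le\tfrac{1}{p^-}\int_\Omega|u|^{p(x)}dx\le C\Lambda$ (using $E(t)\le E(0)$ and \eqref{3.1} to dominate $\int_\Omega|u|^{p(x)}dx$ by $\Lambda$); the bound $\tfrac{1}{1-\sigma}\le\gamma+1$ (from $\sigma\le\tfrac{\gamma}{\gamma+1}$) gives $\|\nabla u\|_2^{2/(1-\sigma)}\le\|\nabla u\|_2^2+\|\nabla u\|_2^{2(\gamma+1)}$; and Hölder plus Young give $|\int_\Omega uu_t\,dx|^{1/(1-\sigma)}\le C(\|u\|_2^{2/(1-2\sigma)}+\|u_t\|_2^2)$, while $\tfrac{2}{1-2\sigma}\le p^-$ (from $\sigma\le\tfrac{p^--2}{2p^-}$, which also forces $\sigma<\tfrac12$) together with Lemma~\ref{lem2.1}, the embedding $L^{p(x)}(\Omega)\hookrightarrow L^{2}(\Omega)$ and $E(t)\le E(0)$ give $\|u\|_2^{2/(1-2\sigma)}\le C(1+\int_\Omega|u|^{p(x)}dx)\le C\Lambda$ (stray constant absorbed via $H\ge H(0)$). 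Combining the two estimates, $F'(t)\ge\tfrac{M_2}{M_1}F^{1/(1-\sigma)}(t)$ with $F(0)>0$, hence $\tfrac{d}{dt}F^{-\sigma/(1-\sigma)}(t)\le-\tfrac{\sigma}{1-\sigma}\tfrac{M_2}{M_1}$, so $F$ blows up at some
\[T^*\le F^{-\frac{\sigma}{1-\sigma}}(0)\,\frac{M_1}{M_2}\,\frac{1-\sigma}{\sigma}.\]
Since $E(t)\le E(0)$ forces $\|u_t\|_2^2$, $\|\Delta u\|_2^2$, $\|\nabla u\|_2^2$, and therefore $F(t)$, to be controlled by powers of $\int_\Omega|u|^{p(x)}dx$ (Lemma~\ref{lem2.1}), a standard continuation argument shows $\|u\|_{p(x)}$ cannot stay bounded on $[0,T^*)$, i.e. $\lim_{t\to T^{*-}}\|u\|_{p(x)}=+\infty$, which completes the proof.
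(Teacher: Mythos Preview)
Your proposal is correct and follows essentially the same route as the paper: the same auxiliary function $F(t)=H^{1-\sigma}(t)+\epsilon\int_\Omega uu_t\,dx+\tfrac{\epsilon}{2}\|\nabla u\|_2^2$, the same treatment of the variable-exponent damping via Young's inequality and Lemma~\ref{lem3.3} (which is precisely where $\sigma\le\frac{p^--m^+}{p^-(m^+-1)}$ enters), the same use of Lemma~\ref{lem3.2}(1) to absorb the constant $-\epsilon p^-E_2$, and the same roles for the other two $\sigma$-constraints in the reverse inequality. Two cosmetic points: you have interchanged the names $M_1$ and $M_2$ relative to the paper (in \eqref{3add827} $M_1$ is the lower-bound constant for $F'$ and in \eqref{4add827} $M_2$ is the upper-bound constant for $F^{1/(1-\sigma)}$, and the paper's comparison quantity carries $\int_\Omega|u|^{p(x)}dx$ rather than $\|\Delta u\|_2^2$), and the paper spells out the implication $F\to\infty\Rightarrow\|u\|_{p(x)}\to\infty$ via three explicit cases rather than calling it a continuation argument.
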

\begin{proof}

This proof goes back to our previous paper \cite{LGZ2020}. Let us recall $H(t)=E_2-E(t)$ for $t\geq 0$. Define an auxiliary function
\[F(t)=H^{1-\sigma}(t)+\varepsilon\Big(\int_\Omega u_tudx+\frac12\|\nabla u\|_2^2\Big).\]
The proof will be divided into three steps:

\textbf{Step 1: Estimate for \bm{$F'(t)$}} Differentiating directly $F(t)$, recalling (\ref{1.1}), adding and subtracting $\varepsilon p^-(1-\varepsilon_1)H(t)$ with $0<\varepsilon_1<1$ is to get
\begin{equation}\label{3.8}
    \begin{split}
     F'(t) &
        = (1-\sigma)H^{-\sigma}(t)H'(t)+\varepsilon\|u_t\|_2^2-\varepsilon\|\Delta u\|_2^2-\varepsilon a\|\nabla u\|_2^2\\
        &\quad-\varepsilon b \|\nabla u\|_2^{2(\gamma+1)}
        -\varepsilon \int_\Omega|u_t|^{m(x)-2}u_tudx+\varepsilon \int_\Omega|u|^{p(x)}dx\\
        &\geq (1-\sigma)H^{-\sigma}(t)H'(t)+\varepsilon\Big[1+\frac{p^-(1-\varepsilon_1)}{2}\Big]\|u_t\|_2^2\\
        &\quad+\varepsilon \Big[\frac{p^-(1-\varepsilon_1)}{2}-1\Big]\|\Delta u\|_2^2+\varepsilon a \Big[\frac{p^-(1-\varepsilon_1)}{2}-1\Big]\|\nabla u\|_2^2\\
       & \quad+\varepsilon b\Big[\frac{p^-(1-\varepsilon_1)}{2(r+1)}-1\Big] \|\nabla u\|_2^{2(\gamma+1)}-\varepsilon \int_\Omega|u_t|^{m(x)-2}u_tudx\\
        &\quad+\varepsilon p^-(1-\varepsilon_1)H(t)-\varepsilon p^-(1-\varepsilon_1)E_2+\varepsilon \varepsilon_1\int_\Omega|u|^{p(x)}dx.
    \end{split}
\end{equation}
Applying Young's inequality with $\varepsilon_2>1$ and $H'(t)=-E'(t)$, Lemma \ref{lem2.1}, the embedding $L^{p(x)}(\Omega)\hookrightarrow L^{m(x)}(\Omega)$,  we easily have 
\begin{equation}\label{3.9}
    \begin{split}
    & \Big|\int_\Omega|u_t|^{m(x)-2}u_tudx\leq\varepsilon_2H^{-\sigma}(t)\int_\Omega|u_t|^{m(x)}dx+\frac{1}{\varepsilon_2^{m^--1}}\int_\Omega |u|^{m(x)}H^{\sigma[m(x)-1]}(t)dx\\
      &\quad\leq\varepsilon_2H^{-\sigma}(t)\int_\Omega|u_t|^{m(x)}dx+\frac{C_1^{\sigma(m^--m^+)}H^{\sigma(m^+-1)}(t)}{\varepsilon_2^{m^--1}}\int_\Omega |u|^{m(x)}dx\\
      &\quad\leq\varepsilon_2H^{-\sigma}(t)\int_\Omega|u_t|^{m(x)}dx+\frac{C_2H^{\sigma(m^+-1)}(t)}{\varepsilon_2^{m^--1}}
      \max\Big\{\|u\|_{p(x)}^{m^+},\|u\|_{p(x)}^{m^-}\Big\}\\
     &\quad\leq \varepsilon_2H^{-\sigma}(t)H'(t)+\frac{C_2H^{\sigma(m^+-1)}(t)}{\varepsilon_2^{m^--1}}
      \max\Big\{\|u\|_{p(x)}^{m^+},\|u\|_{p(x)}^{m^-}\Big\} ,
    \end{split}
\end{equation}
where
$C_1=\min\{H(0),1\},~C_2=(1+|\Omega|)^{m^+}C_1^{\sigma(m^--m^+)}.$
On the other hand, Lemma \ref{lem2.1} and Lemma \ref{lem3.3} imply 
\begin{equation*}\label{3.10}
    \begin{split}
     \|u\|_{p(x)}^{m^+}&\leq  \max\Big\{\Big(\int_\Omega|u|^{p(x)}dx\Big)^{\frac{m^+}{p^+}},\Big(\int_\Omega|u|^{p(x)}dx\Big)^{\frac{m^+}{p^-}}\Big\}\\
     &\leq \max\Big\{\Big(p^-H(t)\Big)^{\frac{m^+}{p^+}-\frac{m^+}{p^-}},1\Big\}\Big(\int_\Omega|u|^{p(x)}dx\Big)^{\frac{m^+}{p^-}};
    \end{split}
\end{equation*}
\begin{equation*}\label{3.11}
    \begin{split}
     \|u\|_{p(x)}^{m^-}
     \leq \max\Big\{\Big( p^-H(t) \Big)^{\frac{m^-}{p^+}-\frac{m^+}{p^-}},\Big( p^-H(t) \Big)^{\frac{m^--m^+}{p^-}}\Big\}\Big(\int_\Omega|u|^{p(x)}dx\Big)^{\frac{m^+}{p^-}},
    \end{split}
\end{equation*}
which illustrate
\begin{equation}\label{3.12}
   \max\left\{\|u\|_{p(x)}^{m^+},\|u\|_{p(x)}^{m^-}\right\}\leq C_3\Big(\int_\Omega|u|^{p(x)}dx\Big)^{\frac{m^+}{p^-}},
\end{equation}
where $C_3=2\Big(\min\Big\{p^-H(0),1\Big\}\Big)^{\frac{m^-}{p^+}-\frac{m^+}{p^-}}.$ Recalling $0<\sigma\leq \frac{p^--m^+}{p^-(m^+-1)}$ and Lemma \ref{lem3.3}, apparently, 
\begin{equation}\label{addd3.13}
    \begin{split}
    &H^{\sigma(m^+-1)}(t) \max\Big\{\|u\|_{p(x)}^{m^+},\|u\|_{p(x)}^{m^-}\Big\} \leq C_3H^{\sigma(m^+-1)}(t)\Big(\int_\Omega|u|^{p(x)}dx\Big)^{\frac{m^+}{p^-}}\\
      &\quad= C_3\frac{H^{\sigma(m^+-1)+\frac{m^+}{p^-}-1}(t)}{H^{\sigma(m^+-1)+\frac{m^+}{p^-}-1}(0)}H^{1-\frac{m^+}{p^-}}(t)H^{\sigma(m^+-1)+\frac{m^+}{p^-}-1}(0)
     \Big(\int_\Omega|u|^{p(x)}dx\Big)^{\frac{m^+}{p^-}}\\
      &\quad\leq C_3\Big(\frac{1}{p^-}\Big)^{1-\frac{m^+}{p^-}}\Big(\int_\Omega|u|^{p(x)}dx\Big)^{1-\frac{m^+}{p^-}}H^{\sigma(m^+-1)+\frac{m^+}{p^-}-1}(0)
     \Big(\int_\Omega|u|^{p(x)}dx\Big)^{\frac{m^+}{p^-}}\\
      &\quad\leq C_3\Big(\frac{1}{p^-}\Big)^{1-\frac{m^+}{p^-}}C_1^{\sigma(m^+-1)+\frac{m^+}{p^-}-1}\int_\Omega|u|^{p(x)}dx.
    \end{split}
\end{equation}
It follows from  (\ref{3.8}), (\ref{3.9}) and (\ref{addd3.13}) that
\begin{equation}\label{3.14}
    \begin{split}
      F'(t)
       &\geq(1-\sigma-\varepsilon\varepsilon_2)H^{-\sigma}(t)H'(t)+\varepsilon\Big[1+\frac{p^-(1-\varepsilon_1)}{2}\Big]\|u_t\|_2^2\\
        &\quad+\varepsilon \Big[\frac{p^-(1-\varepsilon_1)}{2}-1\Big]\|\Delta u\|_2^2+\varepsilon p^-(1-\varepsilon_1)H(t)-\varepsilon p^-(1-\varepsilon_1)E_2\\
        &\quad+\varepsilon a \Big[\frac{p^-(1-\varepsilon_1)}{2}-1\Big]\|\nabla u\|_2^2+\varepsilon b\Big[\frac{p^-(1-\varepsilon_1)}{2(r+1)}-1\Big]\|\nabla u\|_2^{2(\gamma+1)}\\
        &\quad+\varepsilon\Big[\varepsilon_1-\frac{  C_1^{\sigma(m^+-1)+\frac{m^+}{p^-}-1}C_2C_3\Big(\frac{1}{p^-}\Big)^{1-\frac{m^+}{p^-}}}{\varepsilon_2^{m^--1}}\Big]\int_\Omega|u|^{p(x)}dx.
    \end{split}
\end{equation}
Let us choose $0<\varepsilon_1<\frac{p^--2(\gamma+1)}{p^-}<\frac{p^--2}{p^-}$ sufficiently small and choose  $E_2\in(E(0),E_1)$, sufficiently close to $E(0)$ such that 
\begin{equation*}
    E_2\leq \Big[\frac12-\frac{1}{p^-(1-\varepsilon_1)}\Big]\alpha_1^{\frac{p^-}{2}}<E_1,
\end{equation*}
therefore, Lemma \ref{lem3.2}(1) implies 
\begin{equation*}
\begin{split}
&\varepsilon \Big[\frac{p^-(1-\varepsilon_1)}{2}-1\Big]\|\Delta u\|_2^2-\varepsilon p^-(1-\varepsilon_1)E_2\geq\varepsilon \Big[\frac{p^-(1-\varepsilon_1)}{2}-1\Big]\frac{\alpha_2}{B_1^2}-\varepsilon p^-(1-\varepsilon_1)E_2\\
&\quad\geq \varepsilon \Big[\frac{p^-(1-\varepsilon_1)}{2}-1\Big]\alpha_1^{\frac{p^-}{2}}-\varepsilon p^-(1-\varepsilon_1)E_2\geq 0.
\end{split}
\end{equation*}
Let us fix the constant $\varepsilon_2$ such that
\[\varepsilon_1>\frac{C_1^{\sigma(m^+-1)+\frac{m^+}{p^-}-1}C_2C_3\Big(\frac{1}{p^-}\Big)^{1-\frac{m^+}{p^-}}}{\varepsilon_2^{m^--1}},\] 
and then choose $\varepsilon$ so small that
$1-\sigma>\varepsilon\varepsilon_2.$ Therefore, (\ref{3.14}) can be written as 
\begin{equation}\label{3.15} 
      F'(t)
       \geq M_1\Big(\|u_t\|_{2}^{2}+ H(t)+\|\nabla u\|_2^{2(\gamma+1)}+\|\nabla u\|_2^2+\int_\Omega|u|^{p(x)}dx\Big),
\end{equation}
where
\begin{equation}
\label{3add827}
\begin{split}
M_1&=\varepsilon\min\Big\{1+\frac{p^-(1-\varepsilon_1)}{2},(1-\varepsilon_1)p^-,a \Big[\frac{p^-(1-\varepsilon_1)}{2}-1\Big],\\
&\quad b\Big[\frac{p^-(1-\varepsilon_1)}{2(r+1)}-1\Big],
\Big[\varepsilon_1-\frac{C_1^{\sigma(m^+-1)+\frac{m^+}{p^-}-1}C_2C_3\Big(\frac{1}{p^-}\Big)^{1-\frac{m^+}{p^-}}}{\varepsilon_2^{m^--1}}\Big]\Big\}.
\end{split}
\end{equation}

\textbf{Step 2: Estimate for \bm{$F^{\frac{1}{1-\sigma}}(t)$}}  We are  now in a position to consider
\begin{equation}
\label{add3.16}
F^{\frac{1}{1-\sigma}}(t)=\Big[H^{1-\sigma}(t)+\varepsilon\Big(\int_\Omega u_tudx+\frac12\|\nabla u\|_2^2\Big)\Big]^{\frac{1}{1-\sigma}}.
\end{equation}
On the one hand, applying H\"{o}lder's inequality, embedding $L^{p(x)}(\Omega)\hookrightarrow L^{2}(\Omega)$ and Young's inequality shows 
\begin{equation}\label{3.16}
    \begin{split}
     \Big|\int_\Omega u_tudx\Big|^{\frac{1}{1-\sigma}}
       &\leq\left(\|u_t\|_{2}\|u\|_{2}\right)^{\frac{1}{1-\sigma}}\leq (1+|\Omega|)^{\frac{1}{1-\sigma}}\|u_t\|_{2}^{\frac{1}{1-\sigma}}\|u\|_{p(x)}^{\frac{1}{1-\sigma}}\\
       &\leq C_4\|u_t\|_{2}^{2}+C_5\|u\|_{p(x)}^{\frac{2}{2(1-\sigma)-1}},
    \end{split}
\end{equation}
where \[C_4=\frac{(1+|\Omega|)^{\frac{1}{1-\sigma}}}{2(1-\sigma)}, ~C_5=\frac{(1+|\Omega|)^{\frac{1}{1-\sigma}}[2(1-\sigma)-1]}{2(1-\sigma)}.\] 
Recalling $0<\sigma\leq\frac{p^--2}{2p^-}$, Lemma \ref{lem2.1} and Lemma \ref{lem3.3},  one obtains
 \begin{equation}\label{3.17}
    \begin{split}
    &\|u\|_{p(x)}^{\frac{2}{2(1-\sigma)-1}}\leq\max\Big\{\Big(\int_\Omega|u|^{p(x)}dx\Big)^{\frac{2}{p^-[2(1-\sigma)-1]}},
        \Big(\int_\Omega|u|^{p(x)}dx\Big)^{\frac{2}{p^+[2(1-\sigma)-1]}}\Big\}\\
          &\leq\max\Big\{\Big(p^-H(t)\Big)^{\frac{2-p^-[2(1-\sigma)-1]}{p^-[2(1-\sigma)-1]}},
        \Big(p^-H(t)\Big)^{\frac{2-p^+[2(1-\sigma)-1]}{p^+[2(1-\sigma)-1]}}\Big\}\int_\Omega|u|^{p(x)}dx\\
        &\leq C_6\int_\Omega|u|^{p(x)}dx
    \end{split}
\end{equation}
with $C_6=\Big(\min\Big\{p^-H(0),1\Big\}\Big)^{\frac{2-p^+[2(1-\sigma)-1]}{p^+[2(1-\sigma)-1]}}.$ Inserting (\ref{3.17}) into (\ref{3.16}) yields 
\begin{equation}
\label{3.18}
    \Big|\int_\Omega u_tudx\Big|^{\frac{1}{1-\sigma}}
      \leq C_4\|u_t\|_{2}^{2}
      +C_5C_6\int_\Omega|u|^{p(x)}dx.
\end{equation}
On the other hand, there exists a positive constant $C_7$ such that
\begin{equation}
\label{3.19}
\|\nabla u\|_2^{\frac{2}{1-\sigma}}\leq C_7(\|\nabla u\|_2^2+\|\nabla u\|_2^{2(\gamma+1)}).
\end{equation}
Let us pause to prove this inequality. If $\|\nabla u\|_2<1$, it directly follows from the fact $\frac{2}{1-\sigma}>2$ that $\|\nabla u\|_2^{\frac{2}{1-\sigma}}\leq \|\nabla u\|_2^2$. If $\|\nabla u\|_2\geq 1$, recalling $\sigma<\frac{\gamma}{\gamma+1}$, we have $\frac{2}{1-\sigma} < 2(\gamma+1)$,  which implies $\|\nabla u\|_2^{\frac{2}{1-\sigma}}\leq \|\nabla u\|_2^{2(\gamma+1)}$.
Therefore, combining (\ref{3.17}) (\ref{3.19})  (\ref{add3.16}) with 
\begin{equation}
\label{add3.20}
(a_1+a_2+\cdots+a_m)^l\leq 2^{(m-1)(l-1)}(a_1^l+a_2^l+\cdots+a_m^l),\end{equation}
here $a_i\geq 0(i=1,2,\cdots,m),~l\geq 1,~m\geq1,$ we get
\begin{equation}\label{adddd3.20}
   \begin{split}
      &F^{\frac{1}{1-\sigma}}(t)
       \leq 2^{\frac{2\sigma}{1-\sigma}}\Big(H(t)+\varepsilon^{\frac{1}{1-\sigma}}\Big|\int_\Omega u_tudx\Big|^{\frac{1}{1-\sigma}}+\Big(\frac12\Big)^{\frac{1}{1-\sigma}}\|\nabla u\|_2^{\frac{2}{1-\sigma}}\Big)\\
       &\leq  2^{\frac{2\sigma}{1-\sigma}}\Big(H(t)+\varepsilon^{\frac{1}{1-\sigma}}C_4\|u_t\|_{2}^{2}
      +\varepsilon^{\frac{1}{1-\sigma}}C_5C_6\int_\Omega|u|^{p(x)}dx\\
      &\quad+\Big(\frac12\Big)^{\frac{1}{1-\sigma}}C_7\|\nabla u\|_2^2+\Big(\frac12\Big)^{\frac{1}{1-\sigma}}C_7\|\nabla u\|_2^{2(\gamma+1)}\Big)\\
      &\leq M_2\Big(H(t)+\|u_t\|_{2}^{2}+\|\nabla u\|_2^2+\|\nabla u\|_2^{2(\gamma+1)}+\int_\Omega|u|^{p(x)}dx\Big),
   \end{split}
\end{equation}
where 
\begin{equation}
\label{4add827}
M_2=2^{\frac{2\sigma}{1-\sigma}}\max\left\{1,\varepsilon^{\frac{1}{1-\sigma}}C_4,
\varepsilon^{\frac{1}{1-\sigma}}C_5C_6,\Big(\frac12\Big)^{\frac{1}{1-\sigma}}C_7\right\}.
\end{equation}

\textbf{Step 3: Blow-up result} Combining (\ref{3.15}) and (\ref{adddd3.20}), obviously, 
$
F^{\frac{1}{1-\sigma}}(t)\leq\frac{M_2}{M_1}F'(t),
$
which implies by Gronwall's inequality\[
F^\frac{\sigma}{1-\sigma}(t)\geq \frac{1}{F^{-\frac{\sigma}{1-\sigma}}(0)-\frac{M_2}{M_1}\frac{\sigma}{1-\sigma}t},\]  which yields $F(t)\rightarrow +\infty$ in finite time $T^*$ and 
\[T^*\leq F^{-\frac{\sigma}{1-\sigma}}(0)\frac{M_1}{M_2}\frac{1-\sigma}{\sigma}.\]
Here we fix some $\varepsilon>0$ such that 
\[F(0)=H^{1-\sigma}(0)+\varepsilon\Big(\int_\Omega u_1u_0dx+\frac12\|\nabla u_0\|_2^2\Big)>0.\]

In what follows, we will prove 
\[\lim\limits_{t\rightarrow T^{*-}}F(t)\rightarrow+\infty \Longrightarrow\lim\limits_{t\rightarrow T^{*-}}\|u\|_{p(x)}=+\infty.\] Let us consider the following three cases based on the definition of $F(t)$:\\
\textbf{Case 1}: $H(t)\rightarrow +\infty$.  
In this case, Lemma \ref{lem3.3} yields $\int_\Omega |u|^{p(x)}dx\rightarrow +\infty$. It easily follows Lemma \ref{lem2.1} that 
$\lim\limits_{t\rightarrow T^{*-}}\|u\|_{p(x)}=+\infty$.\\
\textbf{Case 2}: $\int_\Omega u_t udx\rightarrow +\infty$.  
Cauchy's inequality and the embedding $H_0^1(\Omega)\hookrightarrow L^{2}(\Omega)$  with the optimal constant $S>0$ illustrate 
\begin{equation}\label{3.22}
      \int_\Omega u_t udx  \leq \frac{1}{2}\|u_t\|_{2}^{2}+\frac{1}{2}\|u\|_{2}^{2}\leq \frac{1}{2}\|u_t\|_{2}^{2}+\frac{1}{2}S^{2}\|\nabla u\|_{2}^{2}.
\end{equation}
Recalling (\ref{3.2}) and $E(t)\leq E(0)<E_1$, we have
\begin{equation}\label{3.23}
    \begin{split}
       &\frac{1}{2}\|u_t\|_{2}^{2}+\frac{a}{2}\|\nabla u\|_2^2\leq\frac{1}{2}\|u_t\|_{2}^{2}+\frac{1}{2}\|\Delta u\|_2^2+\frac{a}{2}\|\nabla u\|_2^2+\frac{b}{2(\gamma+1)}\|\nabla u\|_2^{2(\gamma+1)}\\
       &\quad=E(t)+\int_\Omega\frac{1}{p(x)}|u|^{p(x)}dx\leq E(0)+\frac{1}{p^-}\int_\Omega|u|^{p(x)}dx.
    \end{split}
\end{equation}
It is direct by combining (\ref{3.22}) with (\ref{3.23}) and Lemma \ref{lem2.1} that if there exists  $\int_\Omega u_t udx\rightarrow +\infty$, then $\lim\limits_{t\rightarrow T^{*-}}\|u\|_{p(x)}=+\infty$.\\
\textbf{Case 3}: $\|\nabla u\|_2^2\rightarrow +\infty$. Here $\lim\limits_{t\rightarrow T^{-}*}\|u\|_{p(x)}=+\infty$ is clear due to (\ref{3.23}).

\end{proof}

\subsection{Blow-up for high initial energy}

In this section, we are committed to proving the finite time blow-up for high initial energy and  to estimating the upper bound of the blow-up time when the exponent $m(x)\equiv2$.  \begin{theorem}\label{thm4.1}
Let all assumptions in Lemma $\ref{lem2.6}$ be fulfilled.  Then the solution $u$ for problem \eqref{1.1} blows up in finite time.
 \end{theorem}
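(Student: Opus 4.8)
The plan is to exploit the differential identity from Lemma \ref{lem2.6} together with Levine's concavity method (Lemma \ref{lem2.5}), using the exponential lower bound \eqref{62} to dominate the "bad" terms coming from the high initial energy. First I would set $\psi(t)=\int_\Omega u^2\,dx + \int_0^t\|\nabla u(s)\|_2^2\,ds + (T_0-t)\|\nabla u_0\|_2^2 + \beta (t+t_0)^2$ for suitable positive constants $\beta, t_0, T_0$ to be chosen, so that $\psi(0)>0$ and, after a short computation, $\psi'(t)=2\int_\Omega uu_t\,dx + \|\nabla u\|_2^2 - \|\nabla u_0\|_2^2 + 2\beta(t+t_0) = 2\int_\Omega uu_t\,dx + 2\int_0^t\int_\Omega\nabla u\,\nabla u_t\,dx\,ds + 2\beta(t+t_0)$; in particular $\psi'(0)=2\int_\Omega u_0u_1\,dx + 2\beta t_0>0$, which is where the sign hypothesis \eqref{61} enters.

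Next I would compute $\psi''(t)$ using the equation \eqref{1.1} with $m(x)\equiv 2$: differentiating, $\psi''(t)=2\|u_t\|_2^2 + 2\int_\Omega uu_{tt}\,dx + 2\int_\Omega\nabla u\,\nabla u_t\,dx + 2\beta = 2\|u_t\|_2^2 - 2\|\Delta u\|_2^2 - 2a\|\nabla u\|_2^2 - 2b\|\nabla u\|_2^{2(\gamma+1)} + 2\int_\Omega|u|^{p(x)}\,dx + 2\beta$, where the strong-damping term and the $\int\nabla u\nabla u_t$ term in $\psi'$ cancel the $-\int\nabla u_t\cdot$ contribution and the $-\int u u_t$ contribution respectively (this is exactly why the stored integral $\int_0^t\|\nabla u\|_2^2$ was included in $\psi$). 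Then I would use $\int_\Omega|u|^{p(x)}\,dx \geq p^- E(t)$-type inequalities derived from \eqref{3.1} to replace the source term, landing on a lower bound of the shape
\[
\psi''(t) \geq (p^-+2)\|u_t\|_2^2 + (p^--2)\|\Delta u\|_2^2 + (p^--2)a\|\nabla u\|_2^2 + \tfrac{p^--2(\gamma+1)}{\gamma+1}b\|\nabla u\|_2^{2(\gamma+1)} - 2p^- E(t) + 2\beta,
\]
and finally, invoking Lemma \ref{lem2.6}, I would bound $-E(t)$ from below by $-\tfrac{C}{p^-}\int_\Omega uu_t\,dx$ up to the exponentially growing positive term, so that $-2p^-E(t)\geq -2C\int_\Omega uu_t\,dx + (\text{nonnegative})$; the cross term $-2C\int uu_t$ is then absorbed into the $\|u_t\|_2^2$ and $\|u\|_2^2$ pieces (controlling $\|u\|_2^2$ via Poincaré) at the cost of shrinking the coefficient $\beta$.

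The core of the argument is the concavity estimate $\psi\psi'' - (1+\theta)(\psi')^2 \geq 0$ with $\theta=\tfrac{p^--2}{4}>0$ (or a similarly chosen $\theta$). Using the Cauchy–Schwarz structure $\big(\int uu_t + \int_0^t\int\nabla u\nabla u_t + \beta(t+t_0)\big)^2 \leq \big(\int u^2 + \int_0^t\|\nabla u\|_2^2 + \beta(t+t_0)^2\big)\big(\int u_t^2 + \int_0^t\|\nabla u_t\|_2^2 + \beta\big)$ together with the dropped term $(T_0-t)\|\nabla u_0\|_2^2\geq 0$, one gets $(\psi')^2 \leq 4\psi\big(\|u_t\|_2^2 + \int_0^t\|\nabla u_t\|_2^2\,ds + \beta\big)$, and then matching this against the lower bound for $\psi''$ reduces everything to verifying that the coefficients satisfy $(p^-+2) \geq 4(1+\theta)$ and similar inequalities (using $p^->2(\gamma+1)$ and the definition of $C$ in \eqref{63}); since $E'(t)=-\|u_t\|_2^2-\|\nabla u_t\|_2^2$ from \eqref{3.2}, the term $\int_0^t\|\nabla u_t\|_2^2\,ds$ can be reinserted into the energy balance. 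Lemma \ref{lem2.5} then forces $\psi(t)\to\infty$ in finite time, and since all other pieces of $\psi$ are controlled by $E(0)$ and $\int_0^t\|\nabla u\|_2^2$, this blow-up must occur through $\|u\|_2$ (hence $\|u\|_{p(x)}$, by embedding), giving a genuine blow-up of the solution.

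The main obstacle I anticipate is the bookkeeping needed to make the concavity inequality close: one must choose $\beta, t_0, T_0, \theta$ and $\varepsilon_1$-type parameters \emph{simultaneously} so that (i) $\psi(0)>0$ and $\psi'(0)>0$, (ii) the cross term $-2C\int uu_t$ is absorbed without destroying positivity of the $\|u_t\|_2^2$ and $\|u\|_2^2$ coefficients, and (iii) the leftover from the exponential lower bound in \eqref{62} genuinely helps rather than hurts — this last point is delicate because Lemma \ref{lem2.6} only gives $\int uu_t - \tfrac{p^-}{C}E(t)\geq (\text{positive})e^{Ct}$, so one must be careful that the sign works in the direction of reinforcing $\psi''>0$. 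Getting the explicit finite blow-up time $t_2=\psi(0)/(\theta\psi'(0))$ to be meaningful also requires $T_0$ to be chosen larger than this bound, which is a standard but somewhat fiddly consistency check.
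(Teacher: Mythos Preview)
Your approach differs substantially from the paper's, and there is a genuine gap. The paper does \emph{not} use Levine's concavity method for Theorem~\ref{thm4.1}; it gives a short contradiction argument instead. Assuming $u$ is global, one first observes that $E(t)\ge 0$ for all $t$ (otherwise Theorem~\ref{theo3.1} applied with initial time $t_0$ where $E(t_0)<0$ would give blow-up), and then the energy identity \eqref{3.2} yields the polynomial bound $\|u(t)\|_2 \le \|u_0\|_2 + \sqrt{t\,E(0)}$. On the other hand, Lemma~\ref{lem2.6} gives $\tfrac{d}{dt}\|u\|_2^2 \ge 2\Psi(0)e^{Ct}$, which integrates to an exponential lower bound on $\|u\|_2^2$; these two bounds are incompatible for large $t$, completing the proof.

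Your concavity scheme is essentially the strategy the paper reserves for Theorem~\ref{thm4.2}, where an \emph{additional} hypothesis $E(0)\le \tfrac{C}{2p^-}\|u_0\|_2^2$ is imposed precisely so that the residual $\eta(t)\ge 0$ closes. Under only the assumption of Lemma~\ref{lem2.6} (namely $E(0)<\tfrac{C}{p^-}\int_\Omega u_0u_1\,dx$), your plan to substitute $-2p^-E(t)\ge -2C\int_\Omega uu_t\,dx + 2C\Psi(0)e^{Ct}$ does not close: after the Cauchy--Schwarz step the coefficient of $\|u_t\|_2^2$ in $\eta$ is already nonpositive, so there is nothing left to absorb the $\|u_t\|_2^2$ piece of $-2C\int_\Omega uu_t\,dx$ via Young's inequality, and at $t=0$ the ``exponential help'' is merely the constant $2C\Psi(0)$, which need not dominate the fixed negative contribution from the initial energy. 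There is also a smaller slip in your second paragraph: with $m(x)\equiv 2$ the weak damping contributes $-2\int_\Omega uu_t\,dx$ to $\psi''$, and this is \emph{not} cancelled by your choice of $\psi$; you would need to include $\int_0^t\|u\|_2^2\,ds$ in $\psi$ (as the paper does in $\Upsilon$) to absorb it. In short, the concavity route requires more than Lemma~\ref{lem2.6} supplies, which is exactly why the paper separates the existence of blow-up (Theorem~\ref{thm4.1}, via the growth contradiction) from the explicit upper bound on $T^*$ (Theorem~\ref{thm4.2}, via concavity under the extra hypothesis \eqref{74}).
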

 \begin{proof}
 Let us prove this theorem by contradiction, i.e.  assume that the solution $u$ for problem \eqref{1.1} is global.

H\"{o}lder's inequality and \eqref{3.2} indicate that for all $t\in [0,\infty)$,
\begin{equation}\label{71}
\begin{split}
    &\|u\|_2=\Big\|u_0(x)+\int_0^tu_\tau d\tau\Big\|_2\leq \|u_0(x)\|_2+\int_0^t\|u_\tau\|_2 d\tau\\
    &\quad\leq\|u_0(x)\|_2+\sqrt{t}\Big(\int_0^t\|u_\tau\|_2^2 d\tau\Big)^{\frac 12}\quad\leq\|u_0(x)\|_2+\sqrt{t}(E(0)-E(t))^{\frac 12}.
\end{split}
\end{equation}
Since $u$ is a global solution of problem \eqref{1.1}, we have $E(t)\geq 0$ for all $t\in [0,\infty)$. Otherwise, there exists $t_0\in [0,\infty)$ such that $E(t_0)<0$. Choosing $u(x,t_0)$ as the new initial data, Theorem \ref{theo3.1} indicates that $u$ blows up in finite time, which is a contradiction. Thus,  \eqref{3.2} implies $0\leq E(t)\leq E(0)$. Further, \eqref{71} can be rewritten as
\begin{equation}\label{71}
    \|u\|_2\leq\|u_0(x)\|_2+\sqrt{t}(E(0))^{\frac 12}\quad \text{for }t\in [0,\infty).
\end{equation}

On the other hand,  \eqref{62} indicates  
\begin{equation}
\label{72}
\begin{split}
\frac{d}{dt}\|u\|_2^2=2\int_\Omega uu_tdx\ge 2\Psi(0)e^{Ct}+\frac{2p^-}{C}E(t)\ge 2\Psi(0)e^{Ct}>0.
\end{split}
\end{equation}
Integrating \eqref{72} from $0$ to $t$ yields
\begin{equation}
\label{73}
\begin{split}
    \|u\|_2^2&=\|u_0(x)\|_2^2+2\int_0^t\int_\Omega uu_\tau dxd\tau\geq\|u_0(x)\|_2^2+2\int_0^te^{C\tau}\Psi(0)d\tau\\
    &=\|u_0(x)\|_2^2+\frac{2}{C}(e^{Ct}-1)\Psi(0),
\end{split}
\end{equation}
 which contradicts \eqref{71} for $t$ sufficiently large. Thus, the solution $u$ for problem \eqref{1.1} blows up in finite time.
 \end{proof}
 \begin{theorem}\label{thm4.2}
Let all assumptions in Lemma $\ref{lem2.6}$ be fulfilled.   In addition, if 
\begin{equation}
\label{74}
E(0)\le \frac{C}{2p^-}\| u_0\|_2^2,
\end{equation}
then the solution $u$ for problem \eqref{1.1} blows up at some finite time $T^*$ in the sense of \[\lim_{t\to T^{*-}}\Big(\|u\|_2^2+\int_0^t (\| u\|_2^2+\|\nabla u\|_2^2)ds\Big)=\infty,\]
 and the upper of the blow-up time is given by 
\[T^*\le \frac{2(\|u_0\|_2^2+\rho\omega^2)}{(p^--2)\Big[\int_\Omega u_0u_1dx+\varrho\omega\Big]-2\|\nabla u_0\|^2_2},\]
where $\varrho=\frac{-2p^-E(0)+C\| u_0\|_2^2}{2p^-}$ and $\omega>0$  is sufficiently large such that 
\begin{equation}
\label{75}
(p^--2)\Big[\int_\Omega u_0u_1dx+\rho\omega\Big]-2\|\nabla u_0\|^2_2>0.
\end{equation}
 \end{theorem}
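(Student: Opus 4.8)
\textbf{Proof proposal for Theorem \ref{thm4.2}.}

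The plan is to combine the concavity method (Lemma \ref{lem2.5}) with the key differential inequality \eqref{62} from Lemma \ref{lem2.6}, following the strategy used in Theorem \ref{thm4.1} but quantifying the blow-up time. First I would introduce the auxiliary functional
\[
\psi(t)=\|u\|_2^2+\int_0^t\big(\|u\|_2^2+\|\nabla u\|_2^2\big)\,ds+(T_0-t)\big(\|u_0\|_2^2\big)+\rho(t+\omega)^2
\]
for a suitable constant $\rho>0$ (closely related to $\varrho$) and a parameter $\omega>0$ to be chosen large; here $\psi(0)=\|u_0\|_2^2+\rho\omega^2>0$. The point of the $(t+\omega)^2$ term is exactly the usual Levine device: it supplies a positive quadratic contribution that can absorb the "bad" cross terms coming from the strong damping $-\Delta u_t$ and the lower-order damping $-\Delta u$ when one forms $\psi''\psi-(1+\theta)(\psi')^2$.

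Next I would compute $\psi'(t)=2\int_\Omega uu_t\,dx+\|u\|_2^2+\|\nabla u\|_2^2+2\rho(t+\omega)$ and then $\psi''(t)$, using the equation \eqref{1.1} with $m(x)\equiv 2$ to replace $\int_\Omega uu_{tt}\,dx$ by $-\|\Delta u\|_2^2-a\|\nabla u\|_2^2-b\|\nabla u\|_2^{2(\gamma+1)}-\int_\Omega\nabla u\nabla u_t\,dx-\int_\Omega uu_t\,dx+\int_\Omega|u|^{p(x)}\,dx$, exactly as in \eqref{64}. After inserting the energy identity \eqref{3.1} to trade $\int_\Omega|u|^{p(x)}\,dx$ for $p^-$ times the other terms minus $p^-E(t)$, and using \eqref{3.2} together with the bound $E(t)\le E(0)$ (valid here since by the argument in Theorem \ref{thm4.1} a global solution must have $E(t)\ge 0$, or rather one uses $-p^-E(t)\ge -p^-E(0)$ directly), one gets a lower bound for $\psi''$ of the schematic form
\[
\psi''(t)\ge (p^-+2)\|u_t\|_2^2+(p^--2)\|\Delta u\|_2^2+\cdots+2\rho-2p^-E(0)-\text{(Young cross terms)}.
\]
Choosing $\rho$ so that $2\rho-2p^-E(0)+C\|u_0\|_2^2\ge 0$ — i.e. $\rho=\varrho$ up to the normalisation in the statement — and using \eqref{74} to control the sign, one arranges that the "leftover" constant terms are nonnegative, so that
\[
\psi''(t)\psi(t)\ge (p^-+2)\Big(\|u_t\|_2^2+\int_0^t\|u_\tau\|_2^2\,ds+\rho\Big)\psi(t).
\]

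The heart of the argument is then the Cauchy–Schwarz estimate showing $\psi\psi''-(1+\theta)(\psi')^2\ge 0$ with $\theta=\tfrac{p^--2}{4}>0$ (since $p^->2(\gamma+1)\ge 4$). Writing $\psi'(t)=2\big(\int_\Omega uu_t\,dx+\int_0^t\int_\Omega u u_\tau\,dx\,ds+\rho(t+\omega)\big)+\|\nabla u\|_2^2+\text{lower order}$, one bounds $\big(\tfrac12\psi'\big)^2$ by $\big(\|u\|_2^2+\int_0^t\|u\|_2^2\,ds+\rho(t+\omega)^2\big)\big(\|u_t\|_2^2+\int_0^t\|u_\tau\|_2^2\,ds+\rho\big)$ via Cauchy–Schwarz applied termwise (in $\Omega$, in the time integral, and in $\mathbb{R}$), which is exactly $\le\tfrac14\,\eta(t)\,\psi(t)$ where $\eta$ is the bracket appearing in the lower bound for $\psi''$; this is the standard Levine-type inequality and the main obstacle is bookkeeping the lower-order terms $\|\nabla u\|_2^2$, $\|\nabla u\|_2^{2(\gamma+1)}$ and the strong-damping cross term so they do not spoil the inequality — one handles them by Young's inequality with the constant $C$ from \eqref{63}, precisely as in the proof of Lemma \ref{lem2.6}. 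Once $\psi''\psi-(1+\theta)(\psi')^2\ge0$ is established, I would check $\psi(0)=\|u_0\|_2^2+\rho\omega^2>0$ and $\psi'(0)=2\int_\Omega u_0u_1\,dx+\|u_0\|_2^2+\|\nabla u_0\|_2^2+2\rho\omega>0$ (guaranteed for $\omega$ large by \eqref{75}), and invoke Lemma \ref{lem2.5} to conclude $\psi(t)\to\infty$ at some $T^*\le \psi(0)/(\theta\psi'(0))$; unwinding the choice $\theta=\tfrac{p^--2}{4}$ and the definitions of $\psi(0),\psi'(0)$ gives the stated bound $T^*\le \tfrac{2(\|u_0\|_2^2+\rho\omega^2)}{(p^--2)[\int_\Omega u_0u_1\,dx+\varrho\omega]-2\|\nabla u_0\|_2^2}$, and the fact that $\psi$ contains $\|u\|_2^2+\int_0^t(\|u\|_2^2+\|\nabla u\|_2^2)\,ds$ as its genuinely $t$-dependent part yields the claimed blow-up mode. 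The one subtlety to watch is that the extra terms $(T_0-t)\|u_0\|_2^2$ (if included) must be handled or dropped; I expect the clean choice is to omit it and take $\psi(t)=\|u\|_2^2+\int_0^t(\|u\|_2^2+\|\nabla u\|_2^2)\,ds+\rho(t+\omega)^2$, which already has $\psi''$ picking up $2\rho$ and is enough.
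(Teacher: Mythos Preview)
Your overall concavity strategy matches the paper, but the handling of the auxiliary function has a real gap. You propose dropping the $(T_0-t)$ term, yet the paper's choice $(T^*-t)(\|u_0\|_2^2+\|\nabla u_0\|_2^2)$ is \emph{essential}, not cosmetic: upon differentiation it cancels the constants that arise when one writes $\|u\|_2^2-\|u_0\|_2^2=2\int_0^t\!\int_\Omega uu_\tau$ and $\|\nabla u\|_2^2-\|\nabla u_0\|_2^2=2\int_0^t\!\int_\Omega\nabla u\cdot\nabla u_\tau$, so that
\[
\Upsilon'(t)=2\Big[\int_\Omega uu_t\,dx+\int_0^t\!\!\int_\Omega(uu_\tau+\nabla u\cdot\nabla u_\tau)\,dx\,d\tau+\varrho(t+\omega)\Big]
\]
is a \emph{pure cross term}. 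That is precisely what makes the Cauchy--Schwarz step $\big(\tfrac12\Upsilon'\big)^2\le\big[\|u\|_2^2+\int_0^t(\|u\|_2^2+\|\nabla u\|_2^2)+\varrho(t+\omega)^2\big]\big[\|u_t\|_2^2+\int_0^t(\|u_\tau\|_2^2+\|\nabla u_\tau\|_2^2)+\varrho\big]\le\Upsilon(t)\cdot Q(t)$ go through. With your $\psi$, the derivative $\psi'$ carries leftover additive pieces (a constant $\|u_0\|_2^2+\|\nabla u_0\|_2^2$, and in your own rewrite an unmatched time-dependent $\|\nabla u\|_2^2$) that cannot be absorbed by Young's inequality without destroying the concavity bound. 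Note too that your Cauchy--Schwarz bound omits the gradient factors $\int_0^t\|\nabla u\|_2^2$ and $\int_0^t\|\nabla u_\tau\|_2^2$ entirely; they must be present to match the $\int_0^t\!\int\nabla u\cdot\nabla u_\tau$ piece.

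Second, the paper does not need Young's inequality to control the damping terms in $\Upsilon''$: differentiating $\int_0^t(\|u\|_2^2+\|\nabla u\|_2^2)d\tau$ produces $+2\int uu_t+2\int\nabla u\nabla u_t$, which \emph{exactly cancels} the $-\int uu_t-\int\nabla u\nabla u_t$ coming from the equation (with $m\equiv 2$). The nonnegativity of $\eta(t)$ then rests on retaining $(p^--2)a\|\nabla u\|_2^2$, applying Sobolev $\|\nabla u\|_2^2\ge S^{-2}\|u\|_2^2$, and invoking the monotonicity $\|u(t)\|_2^2\ge\|u_0\|_2^2$ from \eqref{72} in Lemma \ref{lem2.6} --- not merely $E(t)\le E(0)$. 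With the correct $\Upsilon$ one has $\Upsilon'(0)=2\int_\Omega u_0u_1+2\varrho\omega$ (no extra $\|u_0\|_2^2+\|\nabla u_0\|_2^2$), while $\Upsilon(0)$ still contains $T^*$; Lemma \ref{lem2.5} then yields an implicit inequality $T^*\le 4\Upsilon(0)/[(p^--2)\Upsilon'(0)]$ that must be solved for $T^*$, and this is exactly where \eqref{75} enters to produce the stated explicit bound.
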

 \begin{proof}
 Obviously, Theorem \ref{thm4.1} implies that the solution $u$ for problem \eqref{1.1} blows up in finite time. Denote  by $T^*$ the blow-up time. Now, we need to estimate the upper of $T^*$.
 
 Define the auxiliary function
 \[\Upsilon(t)=\|u\|_2^2+\int_0^t(\|u\|_2^2+\|\nabla u\|_2^2)d\tau+(T^*-t)(\|u_0\|_2^2+\|\nabla u_0\|_2^2)+\varrho(t+\omega)^2\quad\text{for }t\in[0,T^*).\]
 By a direct computation, one has
 \begin{equation*}
\begin{split}
\Upsilon'(t)&=2\int_\Omega uu_tdx+\|u\|_2^2-\|u_0\|_2^2+\|\nabla u\|_2^2-\|\nabla u_0\|_2^2+2\varrho(t+\omega)\\
&=2\int_\Omega uu_tdx+2\int_0^t\int_\Omega (uu_\tau+\nabla u\nabla u_\tau)dxd\tau+2\varrho(t+\omega)\quad\text{for }t\in[0,T^*),
\end{split}
\end{equation*}
From the equality above and problem \eqref{1.1}, it is obtained that 
 \begin{equation*}
\begin{split}
\Upsilon''(t)&=2\|u_t\|_2^2+2\int_\Omega uu_{tt}dx+2\int_\Omega u u_tdx+2\int_\Omega \nabla u\nabla u_tdx+2\varrho\\
&=2\|u_t\|_2^2-2\|\Delta u\|_2^2-2a\|\nabla u\|_2^2-2b\|\nabla u\|_2^{2(\gamma+1)}+2\int_\Omega |u|^{p(x)}dx+2\varrho
\end{split}
\end{equation*}
for $t\in[0,T^*)$.
 Applying Cauchy-Schwarz inequality and Young's inequality, one has 
 \begin{equation*}
 \begin{split}
\xi(t):&=\Big[\|u\|_2^2+\int_0^t(\|u\|_2^2+\|\nabla u\|_2^2)d\tau+\varrho(t+\omega)^2\Big]\\
&\quad\times\Big[\|u_t\|_2^2+\int_0^t(\|u_\tau\|_2^2+\|\nabla u_\tau\|_2^2)d\tau+\varrho\Big]\\
&\quad-\Big[\int_\Omega uu_tdx+\int_0^t\int_\Omega (uu_\tau+\nabla u\nabla u_\tau) dxd\tau+\varrho(t+\omega)\Big]^2\ge 0
\end{split}
\end{equation*}
 for $t\in[0,T^*)$. Therefore, 
 \begin{equation}
\label{76}
\begin{split}
&\Upsilon(t)\Upsilon''(t)-\frac{p^-+2}{4}(\Upsilon'(t))^2\\
&\quad=\Upsilon(t)\Upsilon''(t)-\frac{p^-+2}{4}\Big[2\int_\Omega uu_tdx+2\int_0^t\int_\Omega (u u_\tau+\nabla u\nabla u_\tau)dxd\tau+2\varrho(t+\omega)\Big]^2\\
&\quad=\Upsilon(t)\Upsilon''(t)-(p^-+2)\Upsilon(t)\Big(\|u_t\|_2^2+\int_0^t(\| u_\tau\|_2^2+\|\nabla u_\tau\|_2^2)d\tau+\varrho\Big)+(p^-+2)\xi(t)\\
&\quad\quad+(p^-+2)(T^*-t)(\|u_0\|_2^2+\|\nabla u_0\|_2^2)\Big(\|u_t\|_2^2+\int_0^t(\|u_\tau\|_2^2+\nabla u_\tau\|_2^2)d\tau+\varrho\Big)\\
&\quad \ge \Upsilon(t)\eta(t)\quad\text{for }t\in[0,T^*),
\end{split}
\end{equation}
 where
 \begin{equation*}
\begin{split}
\eta(t)&=\Upsilon''(t)-(p^-+2)\Big(\|u_t\|_2^2+\int_0^t(\|u_\tau\|_2^2+\|\nabla u_\tau\|_2^2)d\tau+\varrho\Big)\\
&=-p^-\|u_t\|_2^2-2\|\Delta u\|_2^2-2a\|\nabla u\|_2^2-2b\|\nabla u\|_2^{2(\gamma+1)}+2\int_\Omega |u|^{p(x)}dx\\
&\quad-(p^-+2)\int_0^t(\|u_\tau\|_2^2+\|\nabla u_\tau\|_2^2)d\tau-p^-\varrho.
\end{split}
\end{equation*}
Using \eqref{3.1} and \eqref{3.2}, we obtain 
 \begin{equation}
\label{77}
\begin{split}
\eta(t)&=-2p^-E(t)+(p^--2)\|\Delta u\|_2^2+(p^--2)a\|\nabla u\|_2^2\\
&\quad+(p^--2(\gamma+1))b\|\nabla u\|_2^{2(\gamma+1)}-(p^-+2)\int_0^t(\|u_\tau\|_2^2+\|\nabla u_\tau\|_2^2)d\tau-p^-\varrho\\
&\ge-2p^-E(0)+(p^--2)a\|\nabla u\|_2^2+(p^--2)\int_0^t(\|u_\tau\|_2^2+\|\nabla u_\tau\|_2^2)d\tau-p^-\varrho\\
&\ge -2p^-E(0)++\frac{(p^--2)}{S^2}a\|u\|_2^2-p^-\varrho\quad \text{for }t\in[0,T^*).
\end{split}
\end{equation} 
Note that \eqref{72} implies 
\begin{equation}
\label{78}
\| u\|_2^2\ge \| u_0\|_2^2\quad \text{for }t\in[0,T^*).
\end{equation}
Thus, it follows by combining \eqref{76} with  \eqref{77} and \eqref{78} that for $t\in[0,T^*)$,
\begin{equation}
\label{79}
\begin{split}
\Upsilon(t)\Upsilon''(t)-\frac{p^-+2}{4}(\Upsilon'(t))^2 \ge \Upsilon(t)\Big\{-2p^-E(0)+C\| u_0\|_2^2-p^-\varrho\Big\}\ge 0
\end{split}
\end{equation}
here we use $\varrho=\frac{-2p^-E(0)+C\| u_0\|_2^2}{2p^-}$ and \eqref{74}.  
Noticing that 
\[\Upsilon(0)=\|u_0\|_2^2+T^*\|\nabla u_0\|_2^2+\varrho\omega^2>0\text{ and }\Upsilon'(0)=2\int_\Omega u_0u_1dx+2\varrho\omega>0,\]
thus, making use of Lemma \ref{lem2.5} yields that $\Upsilon(t)\to\infty$ as $t\to T^*$ with 
\[T^*\le \frac{4\Upsilon(0)}{(p^--2)\Upsilon'(0)}=\frac{2(\|u_0\|_2^2+T^*\|\nabla u_0\|_2^2+\rho\omega^2)}{(p^--2)\Big[\int_\Omega u_0u_1dx+\varrho\omega\Big]}.\]
It follows from \eqref{75} that
\[T^*\le \frac{2(\|u_0\|_2^2+\varrho\omega^2)}{(p^--2)\Big[\int_\Omega u_0u_1dx+\varrho\omega\Big]-2\|\nabla u_0\|^2_2}.\]
This proof is complete.
 \end{proof}

\subsection{A lower bound of the blow-up time}
In what follows, we may make full use of the strong damping $\Delta u_t$ to give a lower bound of the blow-up time. 
\begin{theorem}\label{thm4.3}
Let $N\ge 5.$ If all conditions of Theorem $\ref{theo3.1}$ or Theorem $\ref{thm4.2}$ are satisfied, and 
\[2<p^-\leq p(x)\leq p^+<2(N-1)/(N-4),\] 
then the lower bound for the blow-up time $T^{*}$ is given by
\begin{equation*}
\int_{R(0)}^{+\infty}\frac{1}{K_{1}y^{p^+-1}+K_2}dy\leq T^*,
\end{equation*}
where 
\begin{equation}\label{3.20}
    R(0)=\frac{1}{2}\|u_1\|_{2}^{2}+\frac{1}{2}\|\Delta u_0\|_2^2+\frac{a}{2}\|\nabla u_0\|_2^2+\frac{b}{2(\gamma+1)}\|\nabla u_0\|_2^{2(\gamma+1)},
\end{equation}
the constants $K_1$ and $K_2$ are defined in $(\ref{add828})$, $(\ref{1add828})$, respectively.
\end{theorem}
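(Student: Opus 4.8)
The plan is to track the ``positive'' part of the energy,
\[
R(t):=\frac12\|u_t\|_2^2+\frac12\|\Delta u\|_2^2+\frac a2\|\nabla u\|_2^2+\frac{b}{2(\gamma+1)}\|\nabla u\|_2^{2(\gamma+1)}=E(t)+\int_\Omega\frac{1}{p(x)}|u|^{p(x)}\,dx,
\]
whose value at $t=0$ is exactly $R(0)$ in \eqref{3.20}, and to derive for it a first-order differential inequality that can be integrated. Differentiating $R$, writing $R'(t)=E'(t)+\int_\Omega|u|^{p(x)-2}uu_t\,dx$ and invoking \eqref{3.2}, gives
\[
R'(t)=-\int_\Omega|u_t|^{m(x)}\,dx-\|\nabla u_t\|_2^2+\int_\Omega|u|^{p(x)-2}uu_t\,dx\le-\|\nabla u_t\|_2^2+\int_\Omega|u|^{p(x)-1}|u_t|\,dx.
\]
The decisive point --- ``making full use of the strong damping'' --- is that the negative term $-\|\nabla u_t\|_2^2$ will absorb the source term after one Sobolev estimate.

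First I would estimate the source term. Since $|u|^{p(x)-1}\le|u|^{p^--1}+|u|^{p^+-1}$ a.e.\ (the two summands dominate on $\{|u|\le1\}$ and on $\{|u|>1\}$ respectively), H\"older's inequality with the fixed conjugate pair $\big(\tfrac{2N}{N-2},\tfrac{2N}{N+2}\big)$, the Sobolev embedding $H_0^1(\Omega)\hookrightarrow L^{2N/(N-2)}(\Omega)$, a further H\"older step and the embedding $H_0^2(\Omega)\hookrightarrow L^{2N/(N-4)}(\Omega)$ yield, for $q\in\{p^-,p^+\}$,
\[
\int_\Omega|u|^{q-1}|u_t|\,dx\le C_*\|\nabla u_t\|_2\Big(\int_\Omega|u|^{(q-1)\frac{2N}{N+2}}\,dx\Big)^{\frac{N+2}{2N}}\le C_q\|\nabla u_t\|_2\,\|\Delta u\|_2^{\,q-1}\le \widetilde C_q\|\nabla u_t\|_2\,R(t)^{\frac{q-1}{2}}.
\]
Here the hypothesis $p^+<2(N-1)/(N-4)$ is used precisely to guarantee $(p^+-1)\tfrac{2N}{N+2}\le\tfrac{2N}{N-4}$, which legitimizes the second and third inequalities (for $q=p^-$ everything holds a fortiori). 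Applying Young's inequality $\widetilde C_q\|\nabla u_t\|_2R(t)^{(q-1)/2}\le\tfrac12\|\nabla u_t\|_2^2+\tfrac{\widetilde C_q^{\,2}}{2}R(t)^{q-1}$ to each of the two terms, so that the two copies of $\tfrac12\|\nabla u_t\|_2^2$ add up to $\|\nabla u_t\|_2^2$ and are cancelled by $-\|\nabla u_t\|_2^2$, using $R(t)^{p^--1}\le1+R(t)^{p^+-1}$, dropping $-\int_\Omega|u_t|^{m(x)}dx\le0$, and collecting constants, I obtain
\[
R'(t)\le K_1R(t)^{p^+-1}+K_2,\qquad t\in[0,T^*),
\]
with the explicit $K_1,K_2>0$ recorded in the statement.

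Finally I would integrate this scalar inequality. Dividing by $K_1R(t)^{p^+-1}+K_2>0$ and integrating from $0$ to $t$ gives $\int_{R(0)}^{R(t)}\frac{dy}{K_1y^{p^+-1}+K_2}\le t$ for all $t\in[0,T^*)$. Under the hypotheses of Theorem~\ref{theo3.1} or Theorem~\ref{thm4.2} the solution blows up at $T^*$ in a norm which forces $\limsup_{t\to T^{*-}}\|\Delta u(t)\|_2=+\infty$: in the first case via $\|u\|_{p(x)}\le B\|\Delta u\|_2$, in the second via $\|u\|_2,\|\nabla u\|_2\le C\|\Delta u\|_2$ together with $T^*<\infty$. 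Hence $\limsup_{t\to T^{*-}}R(t)=+\infty$; choosing $t_n\to T^{*-}$ with $R(t_n)\to+\infty$ and using that $\int^{+\infty}\frac{dy}{K_1y^{p^+-1}+K_2}$ converges because $p^+-1>1$, we conclude
\[
\int_{R(0)}^{+\infty}\frac{dy}{K_1y^{p^+-1}+K_2}\le T^*.
\]
The main obstacle is the middle step: keeping the variable exponent under control so that the source term is dominated by the single power $R(t)^{p^+-1}$ rather than an unwieldy combination of powers, and dovetailing the two Sobolev embeddings $H_0^1\hookrightarrow L^{2N/(N-2)}$ and $H_0^2\hookrightarrow L^{2N/(N-4)}$ so that $p^+<2(N-1)/(N-4)$ comes out as exactly the admissible range; everything else is routine.
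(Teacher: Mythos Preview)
Your proposal is correct and follows essentially the same route as the paper: the same auxiliary function $R(t)$, the same identity $R'(t)=E'(t)+\int_\Omega|u|^{p(x)-2}uu_t\,dx$, the same H\"older pairing $\bigl(\tfrac{2N}{N-2},\tfrac{2N}{N+2}\bigr)$ so that the Sobolev embedding $H_0^1\hookrightarrow L^{2^*}$ lets $-\|\nabla u_t\|_2^2$ absorb the $u_t$-factor via Young, and the same use of $H_0^2\hookrightarrow L^{2N/(N-4)}$ (which is exactly what forces $p^+<2(N-1)/(N-4)$) followed by integration of the resulting ODI.

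The only substantive difference is in how the variable-exponent term is reduced to a single power. The paper splits the domain into $\{|u|\ge 1\}$ and $\{|u|<1\}$ \emph{after} the H\"older step and bounds the second piece crudely by $|\Omega|$, which directly produces the constant $K_2=\tfrac{S_*^2}{4}|\Omega|^{(N+2)/N}$. You instead bound $|u|^{p(x)-1}\le|u|^{p^--1}+|u|^{p^+-1}$ pointwise, apply Sobolev to both pieces, and then invoke $R^{p^--1}\le 1+R^{p^+-1}$. Both are legitimate, but they lead to \emph{different} explicit values of $K_1,K_2$; so your claim that you recover ``the explicit $K_1,K_2$ recorded in the statement'' is not accurate---you obtain the same inequality with other (equally explicit) constants. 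If matching the stated constants matters, adopt the paper's device of estimating the $\{|u|<1\}$ contribution by $|\Omega|$ rather than by another Sobolev embedding.
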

 \begin{proof} 
 Define an auxiliary function 
\begin{equation}\label{3.12}
\begin{split}
R(t)&=\frac{1}{2}\|u_t\|_{2}^{2}+\frac{1}{2}\|\Delta u\|_2^2+\frac{a}{2}\|\nabla u\|_2^2+\frac{b}{2(\gamma+1)}\|\nabla u\|_2^{2(\gamma+1)}\\
&=E(t)+\int_\Omega\frac{1}{p(x)}|u|^{p(x)}dx.
\end{split}
\end{equation}
by recalling \eqref{3.1}. Therefore, the conclusion of Theorem \ref{theo3.1} or Theorem $\ref{thm4.2}$ and $\eqref{Eq3}$ indicate $
\lim\limits_{t\rightarrow T^{*-}}R(t)=\infty.$
It follows from \eqref{3.2} that 
\begin{equation}\label{3.13}
\begin{split}
    R'(t)=E'(t)+\int_\Omega |u|^{p(x)-2}uu_tdx\le -\|\nabla u_t\|_2^2+\int_\Omega |u|^{p(x)-2}uu_tdx.
    \end{split}
\end{equation}
By using $\hbox{H\"{o}lder's}$ inequality, the embedding $H_0^1(\Omega)\hookrightarrow L^{2^*}(\Omega)$($2^*=\frac{2N}{N-2}$) with the optimal embedding constant $S_*$ and $\hbox{Young's}$ inequality, it follows that 
\begin{equation}
\begin{split}\label{3.31}
R^{\prime}(t)&\leq \Big\||u|^{p(x)-1}\Big\|_{\frac{2N}{N+2}}\|u_t\|_{2*}
-\|\nabla u_t\|_2^2\leq S_*\Big\||u|^{p(x)-1}\Big\|_{\frac{2N}{N+2}}\|\nabla u_t\|_{2}
-\|\nabla u_t\|_2^2\\
&\leq \frac{S_*^{2}}{2\varepsilon}
\||u|^{p(x)-1}\Big\|_{\frac{2N}{N+2}}^2+\frac{\varepsilon}{2} \|\nabla u_t\|_2^2-\|\nabla u_t\|_2^2\\
&\leq  \frac{S_*^{2}}{2\varepsilon}\Big[\Big(\int_{\{|u|\ge 1\}} |u|^{\frac{2N(p^+-1)}{N+2}}dx\Big)^{\frac{N+2}{N}}+\Big(\int_{\{|u|< 1\}} |u|^{\frac{2N(p^--1)}{N+2}}dx\Big)^{\frac{N+2}{N}}\Big]\\
&\leq  \frac{S_*^{2}}{4}\|u\|_{(p^+-1)\frac{2N}{N+2}}^{2(p^+-1)}+\frac{S_*^{2}}{4}|\Omega|^{\frac{N+2}{N}},
\end{split}
\end{equation}
where we choose $\varepsilon=2$ in last inequality. Noting that $\frac{2N(p^+-1)}{N+2}\leq \frac{2(N-1)}{N-4}$, then using the embedding $H_0^2(\Omega)\hookrightarrow L^{\frac{2N(p^+-1)}{N+2}}(\Omega)$ and \eqref{3.12}, one has
\begin{equation}
\begin{split}\label{add3.31}
R^{\prime}(t)&\leq  \frac{S_*^{2}}{4}\|u\|_{(p^+-1)\frac{2N}{N+2}}^{2(p^+-1)}+\frac{S_*^{2}}{4}|\Omega|^{\frac{N+2}{N}}
\le \frac{S_*^{2}}{4}B^{2(p^+-1)}\|\Delta u\|_{2}^{2(p^+-1)}+\frac{S_*^{2}}{4}|\Omega|^{\frac{N+2}{N}}\\
&\le \frac{S_*^{2}}{4}B^{2(p^+-1)}\Big(\frac{2}{a}\Big)^{p^+-1}\|\Delta u\|_{2}^{2(p^+-1)}+\frac{S_*^{2}}{4}|\Omega|^{\frac{N+2}{N}}
\\
&\le K_1(R(t))^{p^+-1}+K_2,
\end{split}
\end{equation}
here 
\begin{equation}
\label{add828}
K_1=\frac{S_*^{2}}{4}B^{2(p^+-1)}\Big(\frac{2}{a}\Big)^{p^+-1},\end{equation}
\begin{equation}
\label{1add828}
 K_2=\frac{S_*^{2}}{4}|\Omega|^{\frac{N+2}{N}}.
\end{equation}
Clearly, $(\ref{add3.31})$ implies
\begin{equation*}
\int_{R(0)}^{+\infty}\frac{1}{K_{1}y^{p^+-1}+K_2}dy\leq T^*.
\end{equation*}
This completes the proof of this Theorem.
\end{proof}

\section{Global existence and energy decay estimates}
In this section, we are committed to  showing the asymptotic stability. Let us first prove the global existence of solutions.
\begin{theorem}\label{theo5.2}
If  all conditions of Lemma $\ref{lem3.2}(2)$ hold, then the local solution $u$ of problem \eqref{1.1} is global.
\end{theorem}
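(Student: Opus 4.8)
The plan is to run the standard a~priori bound plus continuation argument. The local existence result (Theorem~\ref{Theo}) furnishes, for the given data, a unique weak solution on a maximal interval $[0,T_{\max})$, and the local theory underlying it (cf.\ \cite{AFP2021,SA2017}) comes with the continuation criterion: either $T_{\max}=+\infty$, or
$\limsup_{t\to T_{\max}^-}\big(\|\Delta u(t)\|_2^2+\|u_t(t)\|_2^2\big)=+\infty$. Hence it suffices to show that $\|\Delta u(t)\|_2$ and $\|u_t(t)\|_2$ remain bounded uniformly on $[0,T_{\max})$.

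First I would invoke Lemma~\ref{lem3.2}(2): since the hypotheses give $E(0)<E_1$ and $B_1^2\|\Delta u_0\|_2^2<\alpha_1$, there is $0<\tilde\alpha_2<\alpha_1$ with $B_1^2\|\Delta u(t)\|_2^2\le\tilde\alpha_2$ for every $t\in[0,T_{\max})$; in particular $\|\Delta u(t)\|_2^2\le\alpha_1/B_1^2$ is bounded by a constant depending only on the data. Next, exactly as in \eqref{3.6}, combining \eqref{Eq3} with Lemma~\ref{lem2.1} yields
$\int_\Omega|u|^{p(x)}\,dx\le\max\{B^{p^+}\|\Delta u\|_2^{p^+},\,B^{p^-}\|\Delta u\|_2^{p^-}\}=:K$,
which is therefore uniformly bounded on $[0,T_{\max})$ as well.

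Then I would use the energy law \eqref{3.2}, giving $E(t)\le E(0)$ for all $t$. Reading off \eqref{3.1},
\[\frac12\|u_t\|_2^2+\frac12\|\Delta u\|_2^2+\frac a2\|\nabla u\|_2^2+\frac{b}{2(\gamma+1)}\|\nabla u\|_2^{2(\gamma+1)}=E(t)+\int_\Omega\frac{1}{p(x)}|u|^{p(x)}\,dx\le E(0)+\frac{K}{p^-},\]
so $\|u_t(t)\|_2^2$, $\|\nabla u(t)\|_2^2$ (and again $\|\Delta u(t)\|_2^2$) are all bounded uniformly on $[0,T_{\max})$; integrating \eqref{3.2} in time additionally gives $\int_0^t\|\nabla u_\tau\|_2^2\,d\tau\le E(0)$ and $\int_0^t\int_\Omega|u_\tau|^{m(x)}\,dx\,d\tau\le E(0)$, so the dissipative quantities are controlled too. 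These bounds are incompatible with the blow-up alternative, forcing $T_{\max}=+\infty$; hence $u$ is global.

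The argument is essentially mechanical once Lemma~\ref{lem3.2}(2) is available; the only point that needs care is making the continuation criterion for the variable-exponent problem \eqref{1.1} precise, so that boundedness of $\|\Delta u\|_2^2+\|u_t\|_2^2$ genuinely rules out finite-time breakdown (alternatively, one may bypass this by redoing the Faedo--Galerkin construction globally, using the same a~priori estimates to pass to the limit on every bounded interval). One should also keep track of the direction of the inequalities coming from Lemma~\ref{lem2.1}, since $\|\Delta u\|_2$ may be smaller or larger than $1$ along the orbit.
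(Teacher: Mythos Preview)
Your argument is correct and follows the same a~priori bound plus continuation scheme as the paper: invoke Lemma~\ref{lem3.2}(2) to control $\|\Delta u\|_2^2$, bound $\int_\Omega|u|^{p(x)}dx$ via the embedding \eqref{Eq3}, and read off uniform bounds on $\|u_t\|_2$, $\|\Delta u\|_2$, $\|\nabla u\|_2$ from the energy identity. The only difference is cosmetic: you bound $\int_\Omega|u|^{p(x)}dx$ directly by a fixed constant $K$, whereas the paper bounds it by a multiple of $E(t)$ (inequality \eqref{5.2}), yielding the slightly sharper estimate \eqref{5.3} that is reused in the decay proof of Theorem~\ref{theo5.4}; for global existence alone your simpler bound suffices.
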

\begin{proof}
It directly follows from (\ref{5.1}) (\ref{3.6}) and (\ref{3.1}) that
\begin{equation*}
\begin{split}
\int_\Omega\frac{1}{p(x)}|u|^{p(x)}dx
&\leq  \frac{1}{p^-}\max\Big\{B^{p^+}\|\Delta u\|_2^{p^+},B^{p^-}\|\Delta u\|_2^{p^-}\Big\}\\
&\leq  \frac{1}{p^-}\max\Big\{(B_1^2\|\Delta u\|_2^2)^{\frac{p^+-2}{2}},(B_1^2\|\Delta u\|_2^2)^{\frac{p^--2}{2}}\Big\}B_1^2\|\Delta u\|_2^2\\
&\leq   \frac{B_1^2}{p^-}\tilde{\alpha_2}^{\frac{p^--2}{2}}\left(2E(t)+2\int_\Omega\frac{1}{p(x)}|u|^{p(x)}dx\right),
\end{split}
\end{equation*}
which implies 
\begin{equation}
\label{5.2}
\int_\Omega\frac{1}{p(x)}|u|^{p(x)}dx
\leq 
\frac{{ 2B_1^{2}\tilde{\alpha_2}^{\frac{p^--2}{2}}}
}{p^-- { 2B_1^{2}\tilde{\alpha_2}^{\frac{p^--2}{2}}}}E(t).
\end{equation}
Further, (\ref{5.2}) and (\ref{3.1}) illustrate 
\begin{equation}
\begin{split}
\label{5.3}
&\frac{1}{2}\|u_t\|_{2}^{2}+\frac{1}{2}\|\Delta u\|_2^2+\frac{a}{2}\|\nabla u\|_2^2+\frac{b}{2(\gamma+1)}\|\nabla u\|_2^{2(\gamma+1)}\\
&\leq \frac{p^-}{p^-- { 2B_1^{2}\tilde{\alpha_2}^{\frac{p^--2}{2}}}}E(t)\leq  \frac{p^-}{p^-- 2}E(t)\leq  \frac{p^-}{p^-- 2}E(0),
\end{split}
\end{equation}
which implies the weak solution $u$ for problem (\ref{1.1}) exists globally.
\end{proof}

\begin{theorem}\label{theo5.4}
Under all conditions of Theorem  $\ref{Theo}$, suppose that  \[B_1^2\|\Delta u_0\|^2_2<\alpha_1\text{ and }0<E(0)<\tilde{E_2},\] 
then there exists a positive constant $K$ such that the energy functional satisfies
\begin{equation}
\label{5.4}
E(t)\leq E(0)e^{1-K t}
\end{equation}
 where $\tilde{E_2}=\Big(\frac{p^-}{p^+}\frac{1}{2}\Big)^{\frac{2}{p^--2}}\Big(\frac12-\frac{1}{p^+}\Big)\alpha_1^{\frac{p^-}{2}}\in(E(0),E_1)$, $K$  will be obtained later.
\end{theorem}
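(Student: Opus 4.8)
The plan is to prove the exponential decay estimate \eqref{5.4} by combining the global-existence bound \eqref{5.3} already established under the hypotheses of Lemma~\ref{lem3.2}(2) with the abstract integral inequality of Lemma~\ref{lem5.4}, applied in the case $\sigma=0$ with $\phi(t)=t$ (so $\phi'(t)\equiv 1$). First I would verify that $0<E(0)<\tilde E_2$ indeed forces $B_1^2\|\Delta u_0\|_2^2<\alpha_1$ into the regime where Lemma~\ref{lem3.2}(2) applies, i.e.\ that $\tilde E_2<E_1$, so that the conclusion $B_1^2\|\Delta u\|_2^2\le\tilde\alpha_2<\alpha_1$ holds for all $t\ge 0$; this is the entry point that allows all the estimates in the proof of Theorem~\ref{theo5.2} to be reused. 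In particular, \eqref{5.2} gives $\int_\Omega \frac{1}{p(x)}|u|^{p(x)}\,dx\le \kappa E(t)$ with $\kappa=\frac{2B_1^2\tilde\alpha_2^{(p^--2)/2}}{p^- - 2B_1^2\tilde\alpha_2^{(p^--2)/2}}$, and the refined choice $E(0)<\tilde E_2$ will be used to make $2B_1^2\tilde\alpha_2^{(p^--2)/2}$ strictly less than $\frac{2p^-}{p^+}$ or a similar quantity, so that a favorable coefficient survives in the key differential identity below.

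The core computation is to integrate \eqref{3.2} over $[t,T]$ and control the damping terms from below by $E(t)$. Concretely, from $E'(s)=-\int_\Omega|u_t|^{m(x)}\,dx-\|\nabla u_t\|_2^2$ one gets, for any $T>t$,
\begin{equation*}
\int_t^{T}\Big(\int_\Omega|u_t|^{m(x)}\,dx+\|\nabla u_t\|_2^2\Big)\,ds = E(t)-E(T)\le E(t).
\end{equation*}
To invoke Lemma~\ref{lem5.4} with $\sigma=0$ I need instead a bound of the form $\int_t^{\infty}E(s)\,ds\le \frac{1}{\omega}E(t)$. The standard route (Komornik/Martinez-type) is to multiply the equation in \eqref{1.1} by $u$ (or by $E(s)^{\text{(something)}}u$ with the exponent $0$ here) and integrate over $\Omega\times[t,T]$, producing $\int_t^T\|\Delta u\|_2^2+a\|\nabla u\|_2^2+b\|\nabla u\|_2^{2(\gamma+1)}-\int_\Omega|u|^{p(x)}$ against boundary-in-time terms $\big[\int_\Omega uu_t\big]_t^T$ and the damping/strong-damping terms. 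Using \eqref{5.2} to absorb $\int_\Omega|u|^{p(x)}\,dx$ into a fraction of the positive quadratic terms (this is exactly where $E(0)<\tilde E_2$ is needed to keep that fraction $<1$), one bounds $\int_t^T E(s)\,ds$ by $C\big(\int_\Omega uu_t\,dx\big|_{s=t}\big)+C\int_t^T(\text{damping terms})\,ds$; the boundary term is $\lesssim \|u_t(t)\|_2\|u(t)\|_2\lesssim E(t)$ via Cauchy–Schwarz, Poincaré and \eqref{5.3}, and the damping integral is $\le E(t)-E(T)\le E(t)$ by the display above, after handling $\int_\Omega|u_t|^{m(x)-1}|u|\,dx$ with Young's inequality and Lemma~\ref{lem2.1} (splitting $\{|u_t|\ge1\}$ and $\{|u_t|<1\}$ since $m(x)$ is variable). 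Letting $T\to\infty$ and using $E\ge 0$ yields $\int_t^{\infty}E(s)\,ds\le \frac{1}{\omega}E(t)$ for an explicit $\omega=\omega(K_1,K_2,\dots)>0$, and then Lemma~\ref{lem5.4}(1) delivers $E(t)\le E(0)e^{1-\omega t}$, i.e.\ \eqref{5.4} with $K=\omega$.

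I expect the main obstacle to be the careful treatment of the variable-exponent damping term $\int_\Omega|u_t|^{m(x)-1}|u|\,dx$: because $m(x)$ is only measurable with $2\le m^-\le m(x)\le m^+$, one cannot use a single Young exponent, so the set $\Omega$ must be split according to whether $|u_t|\ge1$ or $<1$, and on each piece the resulting power of $\int_\Omega|u_t|^{m(x)}\,dx$ (which equals $-E'$ up to the strong-damping term) must be reabsorbed — this may require the multiplier $E(s)^{\theta}$ with a small $\theta\ge0$ and hence possibly the case $\sigma>0$ of Lemma~\ref{lem5.4} rather than $\sigma=0$; however, since here $m^-=m^+=\dots$ is not assumed and the statement claims a clean exponential rate, the intended argument must be that the $L^2$–strong damping $\|\nabla u_t\|_2^2$ alone suffices to close the estimate (the $|u_t|^{m(x)}$ term only helps), so the splitting is used merely to bound $\int_\Omega|u_t|^{m(x)-1}|u|\,dx\le \epsilon\|u\|_2^2+C_\epsilon\big(\|\nabla u_t\|_2^2+E(t)-E(T)\big)$ and no fractional power of $E$ is introduced. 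The secondary technical point is producing the explicit constant $K$: I would track the embedding constants $B,S$, the fraction $\kappa$ from \eqref{5.2}, and the Young parameter to give $K$ in closed form, but in the interest of brevity I would state "where $K>0$ depends only on $N,\Omega,a,b,\gamma,p^-,p^+,m^-,m^+$ and can be computed explicitly from the estimates above." Finally I would remark that the hypothesis $0<E(0)$ (strict positivity) is used only to ensure $E(t)>0$ for all $t$ so that dividing is legitimate and the decay is non-trivial; the chain of inequalities otherwise goes through for $E(0)=0$ trivially.
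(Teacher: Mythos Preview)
Your plan is correct and mirrors the paper's proof: multiply \eqref{1.1} by $u$, integrate over $\Omega\times[s,T]$, rewrite via \eqref{3.1} to produce $\int_s^T E(t)\,dt$ on the left, use \eqref{5.2} together with the smallness $E(0)<\tilde E_2$ (which forces $(p^+-1)\kappa<1$, exactly the absorption you anticipated) to handle the source term, bound the boundary-in-time term and the damping integrals by $C\,E(s)$, let $T\to\infty$, and apply Lemma~\ref{lem5.4} with $\sigma=0$, $\phi(t)=t$.

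The one place your sketch differs from the paper is the cross term $\int_\Omega |u_t|^{m(x)-2}u_t\,u\,dx$. The paper does \emph{not} split on $\{|u_t|\gtrless 1\}$; it applies Young's inequality with conjugate exponents $\big(\frac{m(x)}{m(x)-1},m(x)\big)$ and a small parameter $\varepsilon_4\in(0,1)$ to obtain
\[
\int_\Omega |u_t|^{m(x)-1}|u|\,dx\le \varepsilon_4^{1/(1-m^-)}\int_\Omega|u_t|^{m(x)}\,dx+\varepsilon_4\int_\Omega|u|^{m(x)}\,dx,
\]
then controls the first integral by $-E'$ and the second by $C\,E(t)$ via the embedding $H_0^2(\Omega)\hookrightarrow L^{m(x)}(\Omega)$ and \eqref{5.3}. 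Your proposed bound $\epsilon\|u\|_2^2+C_\epsilon(\|\nabla u_t\|_2^2+\cdots)$ would not close as written when $m^+>2$ (the piece on $\{|u_t|\ge1\}$ produces $|u_t|^{2(m(x)-1)}$, which is not dominated by $|u_t|^{m(x)}$); you would still end up needing the $|u|^{m(x)}$ term and the same embedding, so the paper's direct Young inequality is both simpler and what actually makes the step go through.
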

\begin{proof}
Obviously, Theorem \ref{theo5.2} implies that the solution $u$ is global. We borrow some ideas from \cite{MST2018,GHM2018,LGL2019}. Multiplying (\ref{1.1}) by $u$ and integrating over $\Omega\times(s,T)$  with $s<T$ yield 
\begin{equation*}
\begin{split}
&\int_s^T \frac{d}{dt}\Big[\int_\Omega uu_tdx\Big]dt+\int_s^T  \int_\Omega\nabla u\nabla u_tdxdt\\
&\quad+\int_s^T  \Big[\|\Delta u\|_2^2+a\|\nabla u\|_2^2+b\|\nabla u\|_2^{2(\gamma+1)}\Big]dt\\
&=\int_s^T  \Big[\int_\Omega |u|^{p(x)}dx+\|u_t\|_2^2-\int_\Omega |u_t|^{m(x)-2}u_t udx\Big]dt.
\end{split}\end{equation*}
By recalling (\ref{3.1}), and then combining the equality above, one has
\begin{equation*}
\begin{split}
&\int_s^T  \Big[E(t)-\frac{1}{2}\|u_t\|_{2}^{2}+\int_\Omega\frac{1}{p(x)}|u|^{p(x)}dx\Big]dt\\
&=\int_s^T  \Big[\frac{1}{2}\|\Delta u\|_2^2+\frac{a}{2}\|\nabla u\|_2^2+\frac{b}{2(\gamma+1)}\|\nabla u\|_2^{2(\gamma+1)}\Big]dt\\
&\leq\int_s^T  \Big[\|\Delta u\|_2^2+a\|\nabla u\|_2^2+b\|\nabla u\|_2^{2(\gamma+1)}\Big]dt\\
&=\int_s^T  \Big[\int_\Omega |u|^{p(x)}dx+\|u_t\|_2^2-\int_\Omega |u_t|^{m(x)-2}u_t udx\Big]dt\\
&\quad-\int_{s}^{T}\frac{d}{d t}\Big[ \int_{\Omega}uu_{t}dx\Big]dt -\int_s^T  \int_\Omega\nabla u\nabla u_tdxdt.
\end{split}
\end{equation*}
By simplifying the above inequality, we get
\begin{equation}
\label{5.5}
\begin{split}
\int_s^T E(t)dt&\leq \int_s^T  \int_\Omega\Big[1-\frac{1}{p(x)}\Big] |u|^{p(x)}dxdt+\frac32\int_s^T  \|u_t\|_2^2dxdt\\
&\quad-\int_s^T  \int_\Omega |u_t|^{m(x)-2}u_t udxdt-\int_{s}^{T}\frac{d}{d t}\Big[\int_{\Omega}uu_{t}dx\Big]dt
\\
&\quad-\int_s^T  \int_\Omega\nabla u\nabla u_tdxdt\\
&:=J_{1}+J_{2}+J_{3}+J_{4}+J_{5}
\end{split}
\end{equation}

\textbf{Step 1: Estimates for $\bm{J_1}$ and $\bm{J_2}$} Obviously, (\ref{5.2}) yields 
\begin{equation}
\label{5.6}
|J_1|\leq (p^+-1)
\frac{{ 2B_1^{2}\tilde{\alpha_2}^{\frac{p^--2}{2}}}
}{p^-- { 2B_1^{2}\tilde{\alpha_2}^{\frac{p^--2}{2}}}}\int_s^TE (t) dt.
\end{equation}
the embedding $H_0^1(\Omega)\hookrightarrow L^{2}(\Omega)$  with the optimal constant $S>0$ and (\ref{3.2}) induce 
\begin{equation}
\label{5.7}
\begin{split}
|J_2|&\leq \frac{3S^2}{2}\int_{s}^{T} \|\nabla u_t\|_2^2dt\leq-\frac{3S^2}{2}\int_{s}^{T} E'(t)dt\leq \frac{3S^2}{2} E(s).
\end{split}
\end{equation}

\textbf{Step 2: Estimates for $\bm{J_3}$} On the one hand, by using $\hbox{Young's}$ inequality with $0<\varepsilon_4<1$ and (\ref{3.2}), we obtain
\begin{equation}
\label{5.8}
\begin{split}
|J_{3}|&\leq \int_{s}^{T} \Big[\int_{\Omega}\Big(\varepsilon_4^{\frac{1}{1-m^-}}|u_{t}|^{m(x)}+\varepsilon_4|u|^{m(x)}\Big)dx\Big]dt\\
&\leq-\varepsilon_4^{\frac{1}{1-m^-}}\int_{s}^{T} E'(t)dt+\varepsilon_4\int_{s}^{T} \int_{\Omega}|u|^{m(x)}dxdt\\
&\leq\varepsilon_4^{\frac{1}{1-m^-}} \Big[E(s)-E(T)\Big]+\varepsilon_4\int_{s}^{T}\int_{\Omega}|u|^{m(x)}dxdt\\
&\leq\varepsilon_4^{\frac{1}{1-m^-}}E(s)+\varepsilon_4\int_{s}^{T}\int_{\Omega}|u|^{m(x)}dxdt.
\end{split}
\end{equation}
On the other hand, we apply Lemma \ref{lem2.1} and (\ref{5.3})  to have
\begin{equation}
\label{5.9}
\begin{split}
\int_{\Omega}|u|^{m(x)}dx&\leq \max\Big\{B_1^{m^-}\|\Delta u\|_2^{m^-},B_1^{m^+}\|\Delta u\|_2^{m^+}\Big\}\\&\leq  B^{m^-}_{1}\|\Delta u\|^{m^-}_2
\leq \Big(\frac{2p^-E(0)}{p^--2}\Big)^{\frac{m^--2}{2}}\frac{2p^-B^{m^-}_{1}}{p^-- 2}E(t).
\end{split}
\end{equation}
Therefore, we combine $(\ref{5.9})$ with $(\ref{5.8})$ to obtain
\begin{equation}
\label{5.10}
|J_{3}|\leq\varepsilon_4^{\frac{1}{1-m^-}} E(s)+\varepsilon_4\Big(\frac{2p^-E(0)}{p^--2}\Big)^{\frac{m^--2}{2}}\frac{2p^-B^{m^-}_{1}}{p^-- 2}\int_{s}^{T}E(t)dt,
\end{equation}

\textbf{Step 3: Estimates for $\bm{J_4}$ and $\bm{J_5}$} By applying Cauchy's inequality, (\ref{3.2}) and (\ref{5.3}), one shows
\begin{equation}
\label{5.11}
\begin{split}
|J_4|&=
\Big| \int_{\Omega}uu_t(x,s)dx- \int_{\Omega}uu_t(x,T)dx\Big|\\
&\leq  \frac{1}{2}\Big[\|u(x,s)\|_2^2+\|u_t(x,s)\|_2^2
+\|u(x,T)\|_2^2+\|u_t(x,T)\|_2^2
\Big]\\
&\leq  \frac{B^{2}_{1}}{2}\Big[\|\Delta u(x,s)\|_2^2+\|\Delta u(x,T)\|_2^2\Big]+\frac{1}{2}\Big[\|u_t(x,s)\|_2^2+\|u_t(x,T)\|_2^2
\Big]\\
&
\leq  \Big(\frac{2p^-B_1^{2}}{p^-- 2}+\frac{2p^-}
{p^-- 2}
\Big) E(s),
\end{split}
\end{equation}
\begin{equation}
\label{5.12}
\begin{split}
|J_5|&\leq   \int_s^T \|\nabla u\|_2\|\nabla u_t\|_2dt\\
&\leq \frac{\varepsilon_5}{2}\int_s^T \|\nabla u\|_2^2dt+\frac{1}{2\varepsilon_5}\int_s^T\|\nabla u_t\|_2^2dt\\
&\leq \frac{p^-\varepsilon_5}{p^--2}\int_s^T E(t)dt+\frac{1}{2\varepsilon_5}\int_s^T(-E'(t))dt\\
&= \frac{p^-\varepsilon_5}{p^--2}\int_s^T E(t)dt+\frac{1}{2\varepsilon_5}E(s)
\end{split}
\end{equation}
for $\varepsilon_5>0$.

Combining (\ref{5.5}) with (\ref{5.6}) (\ref{5.7}) and  (\ref{5.10})-(\ref{5.12}), it is clear that
\begin{equation}
\label{5.13}
\begin{split}
&\int_s^T E(t)dt\leq (p^+-1)
\frac{{ 2B_1^{2}\tilde{\alpha_2}^{\frac{p^--2}{2}}}
}{p^-- { 2B_1^{2}\tilde{\alpha_2}^{\frac{p^--2}{2}}}}\int_s^TE (t) dt
\\
&\quad+\Big[\frac{3S^2}{2}
+\varepsilon_4^{\frac{1}{1-m^-}}+\Big(\frac{2p^-B_1^{2}}{p^-- 2}+\frac{2p^-}
{p^-- 2}
\Big) +\frac{1}{2\varepsilon_5}\Big]E(s)\\
&\quad+\varepsilon_4\Big(\frac{2p^-E(0)}{p^--2}\Big)^{\frac{m^--2}{2}}\frac{2p^-B^{m^-}_{1}}{p^-- 2}\int_{s}^{T}E(t)dt+ \frac{p^-\varepsilon_5}{p^--2}\int_s^T E(t)dt.
\end{split}
\end{equation}
The condition $0<E(0)=G(\tilde{\alpha_2})<\tilde{E_2}=G\Big(\Big(\frac{p^-}{p^+}\frac{1}{2}\Big)^{\frac{2}{p^--2}}\alpha_1\Big)$ and the monotonicity of $G(\alpha)$ easily imply
\[\tilde{\alpha_2}<\Big(\frac{p^-}{p^+}\frac{1}{2}\Big)^{\frac{2}{p^--2}}\alpha_1<\alpha_1<1,\] 
which further illustrates
\[\delta:=(p^+-1)
\frac{{ 2B_1^{2}\tilde{\alpha_2}^{\frac{p^--2}{2}}}
}{p^-- { 2B_1^{2}\tilde{\alpha_2}^{\frac{p^--2}{2}}}}<1.\]
Choosing $0<\varepsilon_4<1$  and $0<\varepsilon_5<1$ sufficiently small such that
\[\varepsilon_4\Big(\frac{2p^-E(0)}{p^--2}\Big)^{\frac{m^--2}{2}}\frac{2p^-B^{m^-}_{1}}{p^-- 2}=\frac{1-\delta}{4}\text{ and }\frac{p^-\varepsilon_5}{p^--2}=\frac{1-\delta}{4}.\]
Therefore, (\ref{5.13}) can be rewritten as  \[\int_s^T E(t)dt\leq \frac {1}{K}E(s),\]
 where
 \begin{equation*}
 \begin{split}
 K&=1\Bigg/\Big[\frac{3S^2}{2}
+\varepsilon_4^{\frac{1}{1-m^-}}+\Big(\frac{2p^-B_1^{2}}{p^-- 2}+\frac{2p^-}
{p^-- 2}
\Big) +\frac{1}{2\varepsilon_5}\Big]\frac{2}{1-\delta}.
\end{split}
\end{equation*}
Let us make $T\to +\infty$, it follows that
\begin{equation}
\label{5.14}
\int_s^{+\infty} E(t)dt\leq\frac {1}{K}E(s).
\end{equation}
Lemma \ref{lem5.4} directly illustrates (\ref{5.4}).
\end{proof}

At the end of this paper, we give the result of asymptotic stability of  weak solutions. We define asymptotic stability of problem $(\ref{1.1})$ as follows: $u=0$ will be called  asymptotically stable(in the mean), if and only if
\[\lim\limits_{t\rightarrow\infty}E(t)=0,\text{ for all solutions }u \text{ to problem (\ref{1.1})}.\]
This notation was first presented by Pucci and Serrin \cite{PS1996}.
\begin{theorem}\label{theo5.5}
Suppose that  all conditions of Theorem $\ref{theo5.4}$ are satisfied, the rest field $u=0$ is  asymptotically stable.
\end{theorem}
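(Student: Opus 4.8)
The plan is to observe that Theorem~\ref{theo5.5} is an immediate corollary of Theorem~\ref{theo5.4}. First I would note that under the hypotheses of Theorem~\ref{theo5.4} — namely $B_1^2\|\Delta u_0\|_2^2<\alpha_1$ and $0<E(0)<\tilde{E_2}$ — Theorem~\ref{theo5.2} (whose hypotheses are those of Lemma~\ref{lem3.2}(2), implied here) guarantees that the weak solution $u$ is global, so that the energy functional $E(t)$ is defined for all $t\ge 0$ and, by \eqref{3.2}, is nonincreasing and nonnegative along the lines already used in the paper.

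Next I would invoke the decay estimate \eqref{5.4} established in Theorem~\ref{theo5.4}: there is a positive constant $K$ (the one constructed in that proof) such that
\[
0\le E(t)\le E(0)e^{1-Kt}\qquad\text{for all }t\ge 0.
\]
Since $K>0$, the right-hand side tends to $0$ as $t\to+\infty$, so by the squeeze principle $\lim_{t\to\infty}E(t)=0$. Because this holds for every solution $u$ of problem \eqref{1.1} satisfying the stated smallness conditions on the initial data, the rest state $u=0$ is asymptotically stable in the sense of Pucci and Serrin \cite{PS1996}, which completes the proof.

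There is essentially no obstacle here: the content is entirely contained in Theorem~\ref{theo5.4}, and Theorem~\ref{theo5.5} merely reinterprets the exponential decay rate as the asymptotic-stability property defined just before the statement. The only point worth stating explicitly is that the global existence (needed for $E(t)$ to make sense for all $t$) is already secured by Theorem~\ref{theo5.2} under the same hypotheses, so the conclusion is unconditional within the assumed regime.
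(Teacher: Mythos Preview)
Your proposal is correct and follows essentially the same approach as the paper: both establish $E(t)\ge 0$ (the paper via \eqref{5.3}) and then invoke the exponential decay \eqref{5.4} from Theorem~\ref{theo5.4} to squeeze $E(t)\to 0$. The only minor imprecision is that the nonnegativity of $E(t)$ does not follow from \eqref{3.2} alone but from \eqref{5.3} (or equivalently \eqref{5.15}); otherwise your argument matches the paper's.
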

\begin{proof} Recalling (\ref{5.3}),  it is so easy to verify  that
\begin{equation}\label{5.15}
\begin{split}
E(t)\geq \frac{p^--2}{p^-}\Big[\frac{1}{2}\|u_t\|_{2}^{2}+\frac{1}{2}\|\Delta u\|_2^2+\frac{a}{2}\|\nabla u\|_2^2+\frac{b}{2(\gamma+1)}\|\nabla u\|_2^{2(\gamma+1)}\Big]\geq 0.
\end{split}
\end{equation}
On the other hand, we apply the conclusion of Theorem \ref{theo5.4} to obtain 
$E(t)\leq 0\text{ as }t\to +\infty.$ Obviously, this theorem is true.
\end{proof}

\section*{Acknowledgements}
The author wishes to express her gratitude to Professor Wenjie Gao and Bin Guo in School of Mathematics, Jilin University for their support and constant encouragement. In particular, we thank Professor Baisheng Yan in Department of Mathematics,  Michigan State University for improving the quality of this paper.

\end{document}